
\documentclass[final]{MyElsarticle}

\usepackage{framed,multirow}

\usepackage{geometry}
 \geometry{twoside,
  paperwidth=192mm,
  paperheight=262mm,
  textheight=635.35pt,
  textwidth=467.75pt,
  inner=37pt,
  outer=42pt,
  top=52pt,
  bottom=52pt,
  headheight=12pt,
  headsep=12pt,
  footskip=16pt,
  footnotesep=28pt plus 2pt minus 6pt,
  columnsep=18pt
 }
 
 \global\bibsep=0pt
 \input{fleqn.clo}

\usepackage{graphicx}
\usepackage{amssymb}
\usepackage{latexsym}
\usepackage[fleqn]{amsmath}
\usepackage{mathtools}
\usepackage[overload]{empheq}

\usepackage{url}
\usepackage[dvipsnames]{xcolor}
\usepackage{nicefrac}
\usepackage{xfrac}
\usepackage[normalem]{ulem}

\usepackage{lineno}



\biboptions{square,sort&compress}
\journal{Journal of Computational Physics}



\definecolor{MSBlue}{rgb}{.204,.353,.541}
\definecolor{MSLightBlue}{rgb}{.31,.506,.741}

\usepackage[T1]{fontenc}
\usepackage[sfscaled=true,cal=cmcal,charter]{mathdesign}

    \usepackage{caption}


\colorlet{HREFCOLOR}{RoyalBlue}
\usepackage{hyperref}
\hypersetup{%
  breaklinks,
  colorlinks=true,
  linkcolor=HREFCOLOR,
  anchorcolor=HREFCOLOR,
  citecolor=HREFCOLOR,
  filecolor=HREFCOLOR,
  menucolor=HREFCOLOR,
  urlcolor=HREFCOLOR,
  bookmarksnumbered,
  pdfencoding=auto,
  pdfstartview=FitH,
  pdfpagemode=UseNone,
  pdfnewwindow=true,
  bookmarksopen=false,
  baseurl={}
}

\def\cf{\emph{cf.}}
\def\ie{\emph{i.e.}}
\def\eg{\emph{e.g.}}
\def\etc{\emph{etc.}}
\def\resp{{resp.}}

\def\ZZ{\mathbb{Z}}
\def\RR{\mathbb{R}}

\def\CC{\mathbb{C}}


\def\bi{\mathbf{i}}

\def\bk{\mathbf{k}}

\def\bx{\mathbf{x}}

\def\bz{\mathbf{z}}


\DeclareMathOperator{\re}{Re}         
\DeclareMathOperator{\im}{Im}         
\DeclareMathOperator{\sgn}{sgn}       
\DeclareMathOperator{\spanset}{Span}  
\newcommand{\overbar}[1]{\overline{#1}}
\newcommand*{\conj}[1]{\overbar{#1}}  





\newcommand{\idx}[1]{{#1}}

\newcommand{\half}{{\sfrac{1}{2}}}

\newcommand{\iu}{{{\iota\mkern1mu}}} 
\newcommand{\di}{{\alpha}} 
\newcommand{\dx}{{h\mkern1mu}} 
\newcommand{\cK}{{\mathcal{K}}} 

\newcommand{\figref}[1]{\hyperref[#1]{\upshape Fig.~\ref{#1}}}
\newcommand{\defref}[1]{\hyperref[#1]{\upshape Def.~\ref{#1}}}
\renewcommand{\eqref}[1]{\hyperref[#1]{\upshape (\ref{#1})}}
\newcommand{\teqref}[1]{\hyperref[#1]{\upshape Eq.~(\ref{#1})}}
\newcommand{\secref}[1]{\hyperref[#1]{\upshape Sec.~\ref{#1}}}
\newcommand{\appref}[1]{\hyperref[#1]{\upshape App.~\ref{#1}}}
\newcommand{\thmref}[1]{\hyperref[#1]{\upshape Thm.~\ref{#1}}}
\newcommand{\propref}[1]{\hyperref[#1]{\upshape Prop.~\ref{#1}}}
\newcommand{\lemref}[1]{\hyperref[#1]{\upshape Lemma~\ref{#1}}}
\newcommand{\algref}[1]{\hyperref[#1]{\upshape Alg.~\ref{#1}}}

\newtheorem{theorem}{Theorem}[section]
\newtheorem{lemma}{Lemma}[section]
\newtheorem{proposition}{Proposition}[section]

\newtheorem{definition}{Definition}[section]

\newdefinition{remark}{Remark}

\newproof{proof}{Proof}

\newcommand{\revise}[1]{#1}

\renewcommand{\sout}[1]{\!}

\begin{document}
\begin{frontmatter}


\title{A Reflectionless Discrete Perfectly Matched Layer}



\author{Albert {Chern}
}
  \ead{chern@math.tu-berlin.de}
\address{Institute of Mathematics, Technical University of Berlin, 10623 Berlin, Germany}


\begin{abstract}
Perfectly Matched Layer (PML) is a widely adopted non-reflecting boundary treatment for wave simulations.  Reducing numerical reflections from a discretized PML has been a long lasting challenge.  This paper presents a new discrete PML for the multi-dimensional scalar wave equation which produces no numerical reflection at all.  The reflectionless discrete PML is discovered through a straightforward derivation using Discrete Complex Analysis.  The resulting PML takes an easily-implementable finite difference form with compact stencil.  In practice, the discrete waves are damped exponentially in the PML, and the error due to domain truncation is maintained at machine zero by a moderately thick PML.  The numerical stability of the proposed PML is also demonstrated.

\end{abstract}

\begin{keyword}
Perfectly matched layers \sep absorbing boundary \sep discrete complex analysis \sep scalar wave equation


\end{keyword}

\end{frontmatter}


\section{Introduction}
\label{sec:Introduction}

Perfectly Matched Layer (PML)
 \cite{Berenger:1994:PML,Gredney:2005:PML,Johnson:2008:NPM}
 is a state-of-the-art numerical technique for absorbing waves at the boundary of a computation domain, highly demanded by simulators of wave propagations in free space.   More precisely, PML is an artificial layer attached to the boundary where the wave equation is modified into a set of so-called PML equations. Requiring simulations only \emph{local} in space and time, the layer effectively damps the waves while creating no reflection at the transitional interface (\figref{fig:WaterShade}).  Although the reflectionless property is well established in the continuous theory, to the best knowledge all existing discretizations of PML equations still produce numerical reflections at the interface due to discretization error (see \eg~\cite[Sec.~7.1]{Johnson:2008:NPM}, \cite[Intro.]{Komatitsch:2007:UCP}, \cite[Sec.~1]{Hagstrom:2014:DAB}, and a review in \secref{sec:RelatedWork}).

This paper presents a new approach to deriving discrete PML equations.  Instead of seeking a high-order discretization of the continuous PML equations, we take the discrete wave equation and find its associated PML equations mimicking the continuous theory but solely in the discrete setting.  The derivation particularly uses \emph{Discrete Complex Analysis}
 \cite{Duffin:1956:BPD,Bobenko:2005:LNT,Lovasz:2004:DAF,Bobenko:2016:DCA}.  
 The resulting discrete PML for the first time ``perfectly matches'' the discrete wave equation.  In particular, it produces \emph{no} numerical reflection at the interface or at any transition where the PML damping parameter varies. This reflectionless property holds for all wavelengths, even for those at the scale of the grid size.

\begin{figure}
	\centering
	\includegraphics[width=0.5\textwidth]{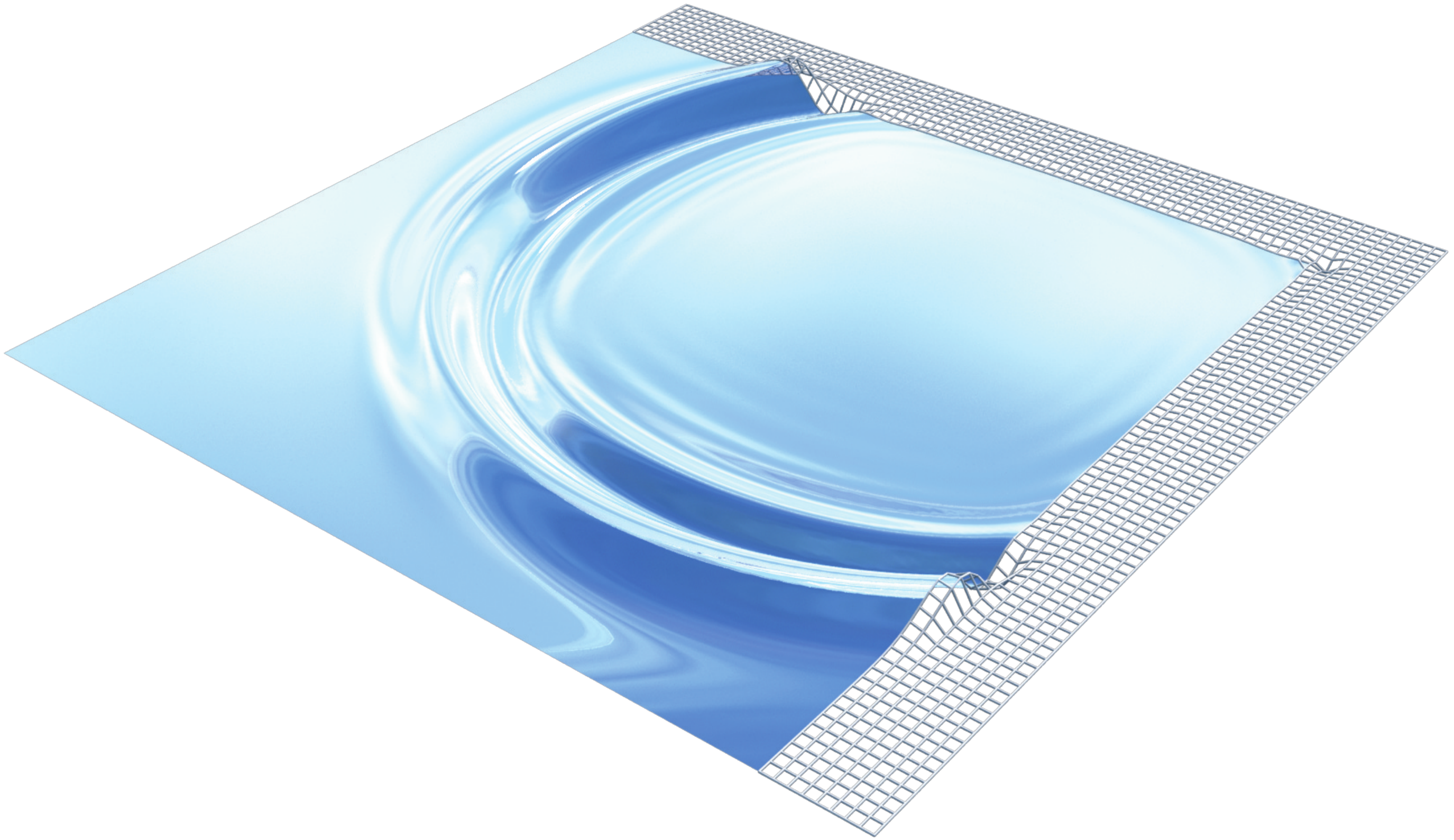}
	\caption{\label{fig:WaterShade}PML (shown as grids) is a layer attached to the boundary absorbing the waves exiting the computation domain.}
\end{figure}

Although the proposed discrete PML produces no transitional reflection, its finite absorption rate does not prevent a small amount of waves to travel all the way through the layer.  In practice, when the layer has to be truncated into a finite thickness, these remaining waves eventually return to the physical domain (\figref{fig:ResidualWater}) either through a periodic boundary, or due to a reflecting boundary at the end of the layer.  These returning waves are referred as the \emph{residual waves} (also known as the \emph{round-trip reflections}).  Note particularly that a residual wave is fundamentally different from a numerical (transitional) reflection: The former is already present in the continuous theory, while the latter is completely avoidable as shown in this paper.  

\begin{figure}[b]
	\centering
	\includegraphics[width=1\textwidth]{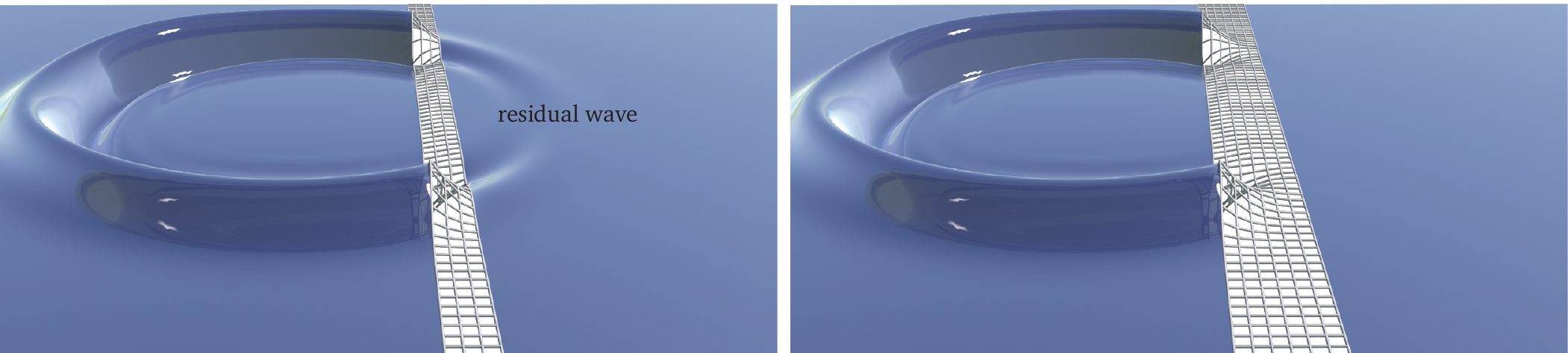}
	\caption{\label{fig:ResidualWater}Waves in the PML (shown as grids) are damped exponentially.  Residual waves, which re-enter the physical domain through a periodic boundary (left), are essentially removed by a sufficiently thick PML (right). There is no numerical reflection at the PML interface in the simulation.}
\end{figure}

In the absence of numerical reflection, the maximal residual wave is scaled down exponentially as the PML is thicken linearly (\figref{fig:ResidualWater}, right) without any grid refinement. 
As a result, the residual waves can also be suppressed below machine zero with a moderately thick PML 
(\secref{sec:RoundtripReflections}).  
In addition, numerical evidence shows that the proposed discrete PML is \emph{stable} (\secref{sec:Stability}).

Before giving a more thorough survey on the related work in \secref{sec:RelatedWork}, I present an overview of the results.

\subsection{Main Results}
\label{sec:MainResults}
\revise{
\subsubsection{Setup}
}
Consider the PML problem for the \((d+1)\)-dimensional scalar wave equation:
\begin{equation}
\label{eq:WaveEquation}
\frac{\partial^2 u}{\partial t^2}(t,\bx) = \sum_{\di=1}^d\frac{\partial^2 u}{\partial x_\di^2} (t,\bx),\quad \bx = (x_1,\ldots,x_d)\in\RR^d.
\end{equation}
Adopting the 2\(^{\textrm{nd}}\)-order central difference yields the semi-discrete wave equation on a regular grid with grid size \(\dx>0\):
\begin{align}
\label{eq:DiscreteWaveEquation}
\frac{\partial^2 U}{\partial t^2}(t,\bi) = \sum_{\di=1}^d\frac{1}{\dx^2}\left(-2U(t,\bi)+(\tau_\di^{-1}U)(t,\bi)+(\tau_\di U)(t,\bi)\right)
\end{align}
for \(\bi=(\idx{i}_1,\ldots,\idx{i}_d)\in \ZZ^d\), where \(U(t,\bi)\) stands for an approximation of \(u(t,\bx)\vert_{\bx = \dx\bi}\), and \(\tau_\di^{\pm 1}\) the shift operator
\[(\tau_\di^{\pm 1} U)(t,\idx{i}_1,\ldots,\idx{i}_\di,\ldots,\idx{i}_d)\coloneqq U(t,\idx{i}_1,\ldots,\idx{i}_\di\pm 1,\ldots \idx{i}_d).\]
With the main focus being the reflectionless property of the PML, we consider the multi-half-space problem; that is, we leave \(\{\idx{i}_1<0\}\cap\cdots\cap\{\idx{i}_d<0\}\) as the physical domain, and let \(\{\idx{i}_1\geq 0\}\cup\cdots\cup\{\idx{i}_d\geq 0\}\) be the domain for PML. 

\begin{figure}
	\centering
	\includegraphics[width=0.45\textwidth]{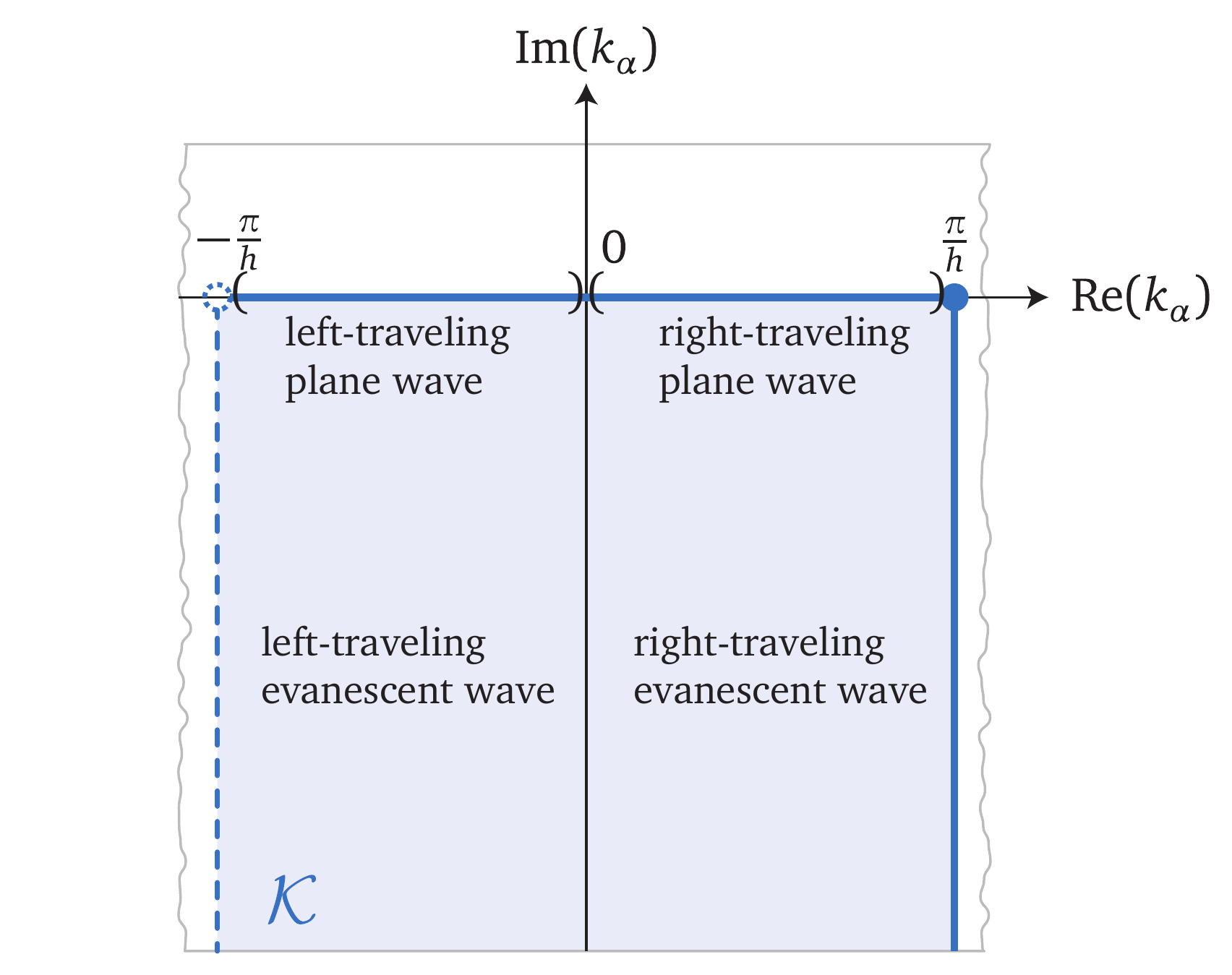}
	\caption{\label{fig:KDomain}\revise{The feasible region \(\cK\subset\CC\) for each wave number component \(k_\di\) of a traveling wave solution \(U(t,\bi) = e^{-\iu\omega t}e^{\iu\sum_{\di=1}^d k_\di\dx\idx{i}_\di}\) of \eqref{eq:DiscreteWaveEquation} permitted in the physical domain \(\bigcap_{\di=1}^d\{\idx{i}_\di < 0\}\).  The annotations about the left- and right-traveling directions are with respect to the case \(\omega>0\).}}
\end{figure}
\revise{Here are a few preliminary facts. In this physical domain the solution consists of elementary waves \(U(t,\bi) = e^{-\iu\omega t}e^{\iu\sum_{\di=1}^d k_\di\dx\idx{i}_\di}\), where \(\omega\in\RR\) and \(k_\di\in\CC\) satisfy the \emph{discrete dispersion relation} 
\(\omega^2\dx^2 = \sum_{\di=1}^d 4\sin^2\left(\nicefrac{k_\di\dx}{2}\right)\).
The feasible values of wave numbers \(k_\di\) are further restricted to
\begin{equation}
    \label{eq:KDomain}
    \cK \coloneqq \left\{k_\di\in\CC\,\vert\,-\tfrac{\pi}{\dx}<\re(k_\di)\leq \tfrac{\pi}{\dx}, \im(k_\di)<0\right\}\subset\CC
\end{equation}
using the periodicity of \(e^{\iu k_\di\dx\idx{i}_\di}\) and the boundedness assumption of \(U(t,\bi)\) on the physical domain \(\bigcap_{\di=1}^d\{\idx{i}_\di < 0\}\).  The wave travels in the positive (resp.\ negative) \(\di\)-direction if \(\sgn(\omega)\re(k_\di)\in(0,\nicefrac{\pi}{\dx})\) (resp.\ \(\in(-\nicefrac{\pi}{\dx},0)\)).  We also call the positive direction \emph{right} and the negative direction \emph{left}.  The wave that has \(\im(k_\di)=0\) for all \(\di\) is called a \emph{plane wave}. The wave that has \(\im(k_\di)<0\) for some \(\di\) is called an \emph{evanescent wave}. See \figref{fig:KDomain}.
}

\subsubsection{The Discrete PML Equations}
\label{sec:IntroDiscretePMLEquations}

In the continuous theory, the PML equations are derived by a change of coordinates in the analytic continuation of the wave equation.  This technique is known as \emph{complex coordinate stretching} \cite{Chew:1994:A3D,Johnson:2008:NPM}, which we will review in \secref{sec:ComplexCoordinateStretching}.  \emph{Discrete Complex Analysis} (\secref{sec:DiscreteComplexAnalysis}) is a study of complex-valued functions supported on a lattice analogous to the theory of holomorphic functions in complex analysis.  Using Discrete Complex Analysis, we present a discrete version of complex coordinate stretching (\secref{sec:DerivationOfDiscretePML}), leading to the discrete PML for the discrete wave equation \eqref{eq:DiscreteWaveEquation}:
\begin{subequations}
\label{eq:DiscretePML}
\begin{align}[left=\empheqlbrace]
	\label{eq:DiscretePMLa}
		&\frac{\partial^2 U}{\partial t^2} = \sum_{\di=1}^d \frac{1}{\dx^2}\left(-2U + \tau_\di^{-1}U + \tau_\di U\right) + \sum_{\di=1}^d\frac{1}{\dx}\left((\sigma_\di)(\tau_\di \Psi_\di) - (\tau_\di^{-1}\sigma_\di)(\tau_\di^{-1}\Phi_\di)\right)\\
	\label{eq:DiscretePMLb}
		&\frac{\partial\Phi_\di}{\partial t} = -\frac{1}{2}\left((\tau_\di^{-1}\sigma_\di)(\tau_\di^{-1}\Phi_\di) + \sigma_\di\Phi_\di\right) - \frac{1}{2\dx}\left(\tau_\di U - \tau_\di^{-1}U\right)\\
	\label{eq:DiscretePMLc}
		&\frac{\partial\Psi_\di}{\partial t} = -\frac{1}{2}\left((\tau_\di^{-1}\sigma_\di)(\Psi_\di) + (\sigma_\di)(\tau_\di\Psi_\di)\right) - \frac{1}{2\dx}\left(\tau_\di U - \tau_\di^{-1}U\right).
\end{align}
\end{subequations}
Eq.~\eqref{eq:DiscretePML} is a time evolution for \(U\) and auxiliary variables \(\Phi_\di,\Psi_\di\) on \(\ZZ^d\). The parameters \(\sigma_\di(\bi) = \sigma_\di(\idx{i}_\di)\geq 0\) are the \emph{damping coefficients}, with \(\sigma_\di(\idx{i}_\di) = 0\) for \(\idx{i}_\di<0\). In the physical domain, where \(\sigma_\di=0\) for all \(\di=1,\ldots,d\), Eq.~\eqref{eq:DiscretePMLa} is reduced to Eq.~\eqref{eq:DiscreteWaveEquation}. The auxiliary variables \(\Phi_\di\) (\resp~\(\Psi_\di\)) and their evolution equations \eqref{eq:DiscretePMLb} (\resp~\eqref{eq:DiscretePMLc}) only need to be supported on the half-spaces \(\{\bi\in \ZZ^d\,\vert\,\idx{i}_\di\geq 0\}\) (\resp~on \(\{\bi\in \ZZ^d\,\vert\,\idx{i}_\di\geq 1\}\)), \ie~in the PML.

The discrete PML equations \eqref{eq:DiscretePML} will be derived in \secref{sec:DerivationOfDiscretePML}.  Alongside the derivation is the analysis of discrete  waves on a discrete complex domain, establishing the following result as a corollary of the theory.

\begin{theorem}[Reflectionless Property of the Discrete PML]
	\label{thm:Reflectionless}
	Let \(U(t,\bi) = e^{-\iu\omega t}e^{\iu\sum_{\di=1}^d k_\di\dx\idx{i}_\di}\), 
\revise{\(k_\di\in\cK\),}
 \(\omega\in\RR\), \(\omega\neq 0\), be a discrete traveling wave that satisfies \textup{Eq.}~\eqref{eq:DiscreteWaveEquation} in the physical domain \(\bigcap_{\di=1}^d\{\idx{i}_\di<0 \}\).  Then \(U(t,\bi)\) can be uniquely extended to a solution of \textup{Eq.}~\eqref{eq:DiscretePML} for all \(\bi\in\ZZ^d\):
	\begin{align}
		\label{eq:IntroSolution}
		U(t,\bi) = \left(\prod_{\di=1}^dr_\di(\idx{i}_\di)\right)e^{-\iu\omega t}e^{\iu\sum_{\di=1}^d k_\di\dx\idx{i}_\di},\quad
		r_\di(\idx{i}) = 
		\begin{cases}
			1,&\idx{i}\leq 0\\
			\prod_{\idx{j}=0}^{\idx{i}-1}\rho(\sigma_\di(\idx{j}),k_\di,\omega),&\idx{i}>0,
		\end{cases}
	\end{align}
	where the geometric decay rate
	\begin{align}
		\label{eq:IntroDecayRate}
		\rho(s,k_\di,\omega) = \frac{2 + \iu\frac{s}{\omega}\left(1-e^{-\iu k_\di\dx}\right)}{2 + \iu\frac{s}{\omega}\left(1-e^{\iu k_\di\dx}\right)}
	\end{align}
    \revise{
    satisfies for \(s>0\), \(\omega\in\RR\), \(k_\di\in\cK\) that
    \begin{subequations}
    \begin{align}
        \label{eq:IntroDecayBound}
        |\rho(s,k_\di,\omega)|<1
    \end{align}
    and
    \begin{align}
        \label{eq:IntroReflectedDecayBound}
        \frac{|\rho(s,k_\di,\omega)|}{\left|\rho\left(s,-\conj{ k_\di},\omega\right)\right|}<1
    \end{align}
    \end{subequations}
     whenever
    \(\sgn(\omega)\re(k_\di)\in(0,\nicefrac{\pi}{\dx})\).
    }
\end{theorem}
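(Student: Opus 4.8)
The plan is to treat the statement in two stages: first the existence, uniqueness, and explicit product form \eqref{eq:IntroSolution} of the extension, which rests on the discrete complex coordinate stretching constructed in \secref{sec:DerivationOfDiscretePML}; and then the two magnitude estimates \eqref{eq:IntroDecayBound}--\eqref{eq:IntroReflectedDecayBound}, which are a self-contained computation about the rational function \(\rho\) in \eqref{eq:IntroDecayRate}.

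For the first stage I would substitute the separable ansatz in which \(U\) and the auxiliary fields \(\Phi_\di,\Psi_\di\) all carry the factor \(e^{-\iu\omega t}\) and split as products over the \(d\) directions, with \(U(t,\bi)=\big(\prod_{\di}r_\di(\idx{i}_\di)\big)e^{-\iu\omega t}e^{\iu\sum_\di k_\di\dx\idx{i}_\di}\). Since \(\partial_t^2\mapsto-\omega^2\) and \(\partial_t\mapsto-\iu\omega\), equations \eqref{eq:DiscretePMLb}--\eqref{eq:DiscretePMLc} express \(\Phi_\di,\Psi_\di\) algebraically through \(U\), and a separation-of-variables argument forces the contribution of each direction to \(-\omega^2\) in \eqref{eq:DiscretePMLa} to stay constant as \(\idx{i}_\di\) crosses the interface. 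That constraint is exactly a first-order spatial recursion \(r_\di(\idx{i}+1)=\rho(\sigma_\di(\idx{i}),k_\di,\omega)\,r_\di(\idx{i})\), whose multiplier I expect to be precisely \eqref{eq:IntroDecayRate}. In the physical region \(\sigma_\di=0\) gives \(\rho=1\) (recovering \eqref{eq:DiscreteWaveEquation} via the dispersion relation and normalizing \(r_\di\equiv1\) for \(\idx{i}\le0\)), and since a first-order recursion with a prescribed value at \(\idx{i}=0\) has a unique solution, existence, uniqueness, and the closed form \eqref{eq:IntroSolution} all follow at once.

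For \eqref{eq:IntroDecayBound} I would set \(\zeta\coloneqq e^{\iu k_\di\dx}\), \(\mu\coloneqq s/\omega\), and write \(\rho=N/D\) with \(N=2+\iu\mu(1-\zeta^{-1})\) and \(D=2+\iu\mu(1-\zeta)\); then \(|\rho|<1\) is equivalent to \(|D|^2-|N|^2>0\). Reducing to \(\omega>0\) (the case \(\omega<0\) following from the symmetry \(|\rho(s,k_\di,\omega)|=|\rho(s,-\conj{k_\di},-\omega)|\)) and parametrizing \(k_\di\dx=\theta-\iu\lambda\) with \(\theta=\re(k_\di)\dx\in(0,\pi)\) and \(\lambda=-\im(k_\di)\dx\ge0\), so that \(|\zeta|=e^{\lambda}\), the difference evaluates in closed form to
\[
|D|^2-|N|^2 = 4\mu^2\sinh\lambda\,(\cosh\lambda-\cos\theta)+8\mu\cosh\lambda\sin\theta,
\]
which is manifestly positive: the right-traveling hypothesis gives \(\sin\theta>0\) and \(\cos\theta<1\le\cosh\lambda\), while \(s>0\), \(\omega>0\) give \(\mu>0\) and \(\lambda\ge0\).

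The delicate step is \eqref{eq:IntroReflectedDecayBound}. The observation I would exploit is that the reflected wave number \(-\conj{k_\di}\) satisfies \(e^{\iu(-\conj{k_\di})\dx}=\conj{\zeta}\), that is, it amounts to \(\theta\mapsto-\theta\) with \(\lambda\) unchanged, so the analogous quantity for the denominator \(D'\) and numerator \(N'\) of \(\rho(s,-\conj{k_\di},\omega)\) is obtained by flipping the sign of the \(\sin\theta\) term only. Writing \(P\coloneqq4\mu^2\sinh\lambda(\cosh\lambda-\cos\theta)\ge0\) and \(Q\coloneqq8\mu\cosh\lambda\sin\theta>0\), the estimate \(|\rho(s,k_\di,\omega)|<|\rho(s,-\conj{k_\di},\omega)|\) becomes, after clearing denominators, \((P+Q)|D'|^2>(P-Q)|D|^2\). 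Using \(|D|^2-|D'|^2=8\mu e^{\lambda}\sin\theta\) and \(|D|^2+|D'|^2=8+4\mu^2 e^{\lambda}(\cosh\lambda-\cos\theta)\), I expect the four-factor comparison to telescope through \(\cosh\lambda-\sinh\lambda=e^{-\lambda}\) and collapse to \(8\mu\sin\theta\,[\,8\cosh\lambda+4\mu^2(\cosh\lambda-\cos\theta)\,]>0\). Confirming that this cancellation genuinely occurs, rather than leaving a sign-indefinite remainder, is the main obstacle, and it is precisely where the specific algebraic form \eqref{eq:IntroDecayRate} of \(\rho\) is used.
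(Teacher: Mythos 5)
Your second stage is correct --- I checked the algebra. With \(\zeta=e^{\lambda}e^{\iu\theta}\) one indeed gets \(|D|^{2}-|N|^{2}=4\mu^{2}\sinh\lambda\,(\cosh\lambda-\cos\theta)+8\mu\cosh\lambda\sin\theta\), and the comparison for \eqref{eq:IntroReflectedDecayBound} does telescope: \((P+Q)|D'|^{2}-(P-Q)|D|^{2}=P\left(|D'|^{2}-|D|^{2}\right)+Q\left(|D|^{2}+|D'|^{2}\right)=8\mu\sin\theta\left[\,8\cosh\lambda+4\mu^{2}(\cosh\lambda-\cos\theta)\,\right]>0\), the \(\cosh\lambda-\sinh\lambda=e^{-\lambda}\) cancellation occurring exactly as you hoped, so the step you flagged as the main obstacle goes through. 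It is, however, a genuinely different argument from the paper's: \appref{app:ProofOfThmDecayRateLessThanOne} rewrites \(\rho=(1-\iu a-\nicefrac{1}{\zeta})/(1-\iu a-\zeta)\) with \(a=\nicefrac{2\omega}{s}\) and proves \eqref{eq:IntroReflectedDecayBound} purely geometrically --- \(1-\iu a\) lies in the lower half-plane while \(\zeta\) lies in the upper and \(\nicefrac{1}{\zeta}\) in the lower, so \(|1-\iu a-\zeta|>|1-\iu a-\conj{\zeta}|\) and \(|1-\iu a-\nicefrac{1}{\conj{\zeta}}|>|1-\iu a-\nicefrac{1}{\zeta}|\) --- and obtains \eqref{eq:IntroDecayBound} from \(|\zeta|>1\) together with the signs of \(\im(\zeta)\) and \(\im(\nicefrac{1}{\zeta})\). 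The paper's route is shorter and conceptual; yours buys explicit positive closed forms, a uniform treatment of \(\lambda\geq 0\) (including the plane-wave boundary case), and, since you clear denominators before estimating, immunity to degeneration of the denominator of \(\rho(s,-\conj{k_\di},\omega)\). Your symmetry reduction to \(\omega>0\) is the same one the paper uses, \(\rho(s,k,-\omega)=\conj{\rho(s,-\conj{k},\omega)}\).

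The genuine gap is in your first stage. The paper never verifies the ansatz against \eqref{eq:DiscretePML}; the extension is obtained \emph{by construction}: \thmref{thm:DecayRate} (proved in \appref{app:ProofOfThmDecayRate}) classifies the eigenspaces of \((-2+\tau^{-1}+\tau)/\dx^{2}\) on \(\CC^{\ZZ}\) --- including the degenerate ones at \(k=0\) and \(k=\nicefrac{\pi}{\dx}\), spanned by \(\{W_{0},A\}\) and \(\{W_{\pi/\dx},AW_{\pi/\dx}\}\) with \(A(\idx{i})=\idx{i}\) --- and shows that the discrete Cauchy--Riemann condition, recast as the recursion \eqref{eq:Recursion}, forces \(f\circ L_{1}=\rho(s_{0},k,\omega)\,\tau(f\circ L_{0})\), with uniqueness resting on \(2+\iu\frac{s}{\omega}(1-e^{-\iu k\dx})\neq 0\); the fields \(\Phi_\di,\Psi_\di\) are then \emph{defined} as the \(\nicefrac{1}{(\iu\omega)}\)-scaled discrete complex derivatives \eqref{eq:PhiDefinition}, \eqref{eq:PsiDefinition} along the path \(\Gamma\), so that \eqref{eq:DiscretePML} holds automatically (end of \secref{sec:DiscreteComplexCoordinateStretching}). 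Your plan inverts this into a direct substitution, which could be made to work, but as written the crux is assumed rather than proved: you say the spatial recursion has a multiplier you ``expect to be precisely'' \eqref{eq:IntroDecayRate}, yet deriving that multiplier from \eqref{eq:DiscretePML} is exactly the computation the theorem turns on, and it first requires solving for the per-site amplitudes of \(\Phi_\di\) and \(\Psi_\di\). These are \emph{not} algebraic in \(U\) as you claim: after \(\partial_t\mapsto -\iu\omega\), Eq.~\eqref{eq:DiscretePMLb} couples \(\tau_\di^{-1}\Phi_\di\) to \(\Phi_\di\) (a forward spatial recursion) and Eq.~\eqref{eq:DiscretePMLc} couples \(\Psi_\di\) to \(\tau_\di\Psi_\di\) (a backward one), so \(\Psi_\di\) is pinned down only after imposing the geometric ansatz or a decay condition at \(\idx{i}_\di\to+\infty\). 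Relatedly, your uniqueness argument (``a first-order recursion with a prescribed value at \(0\)'') gives uniqueness only within the separable ansatz class, whereas the paper's uniqueness comes from the eigenspace classification, where precisely the degenerate cases \(k=0,\nicefrac{\pi}{\dx}\) --- invisible to your sketch --- require the separate treatment with the linear function \(A\). To complete your route you would have to exhibit \(\Phi_\di,\Psi_\di\) explicitly, push the substitution through \eqref{eq:DiscretePMLa} across the interface sites \(\idx{i}_\di=0,1\) where \(\sigma_\di\) switches on, and derive --- not posit --- \(\rho\).
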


\thmref{thm:Reflectionless} guarantees that every traveling \revise{(plane or evanescent)} wave incident to the discrete PML is perfectly transmitted, and \revise{by \eqref{eq:IntroDecayBound}} the amplitude of the wave is successively damped along  \revise{the positive (\(\sgn(\omega)\re(k_\di)\in(0,\nicefrac{\pi}{\dx})\))} direction in the layer. 
This property is true \emph{independent} of the choice of the spatially varying damping coefficients \(\sigma_\di\).  It is worth noting that one of the major numerical studies in previous work is on the profile of \(\sigma_\di\) on which the transitional reflections depend. Multiple studies have drawn a conclusion that \(\sigma_\di\) has to be ``turned on gradually'' in order to allow a less-reflective adiabatic process. In the case of Eq.~\eqref{eq:DiscretePML} such a design of \(\sigma_\di\) becomes irrelevent concerning the transitional reflections. 

\revise{Should there be any reflecting boundary at \(\idx{i}_\di = M\) that truncates the layer, a right-traveling wave would give rise to a reflected left-traveling wave with wave number \(-\conj{k_\di} = -\re(k_\di)+\iu\im(k_\di)\) and a matching amplitude at \(\idx{i}_\di = M\).  As this left-traveling wave reaches the physical domain, the amplitude has been suppressed by the multiplicatively cumulative amount of \(\prod_{\idx{i}=0}^{M-1}|\rho(s,k_\di,\omega)||\rho(s,-\conj{k_\di},\omega)|^{-1}\).   Eq.~\eqref{eq:IntroReflectedDecayBound} of \thmref{thm:Reflectionless} assures the effectiveness of such a truncated PML backed by a reflecting boundary.
In this paper, we truncate the PML with a periodic boundary condition, in which case \eqref{eq:IntroReflectedDecayBound} is never invoked.}

\subsubsection{Numerical Examples}
\label{sec:NumericalExamples}

\begin{figure}[t]
	\centering
	\includegraphics[width=\textwidth]{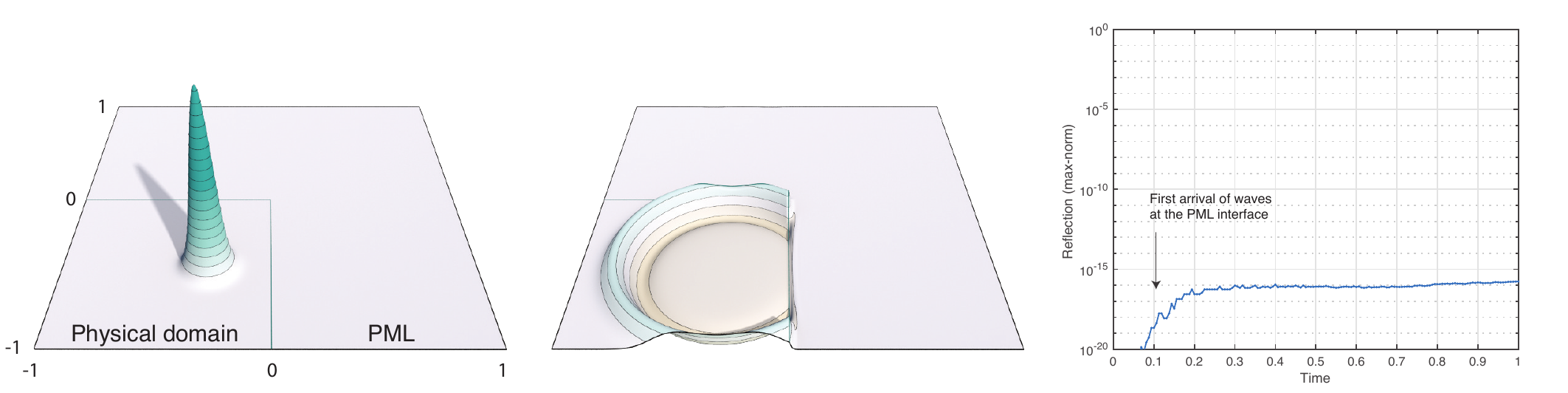}
	\caption{\label{fig:HalfSpaceGauss}A simulation of Eq.~\eqref{eq:DiscretePML} on \([-1,1]^2\) with \(\dx = \nicefrac{1}{80}\), periodic boundary, and constant damping coefficient \(\sigma_\di(\idx{i}_\di) = \nicefrac{2}{\dx}\) for \(\idx{i}_\di\geq 0\). The initial condition is a bump with peak value \(2\), and \((\nicefrac{\partial U}{\partial t})(0,\bi) = 0\). From left to right shows the initial condition at \(t=0\), the solution at \(t=0.5\), and the reflection error \(\max|U - U_{\rm ref}|\) over the physical domain (computed with the double precision). Here the reference ``free-space'' solution \(U_{\rm ref}\) is the solution to Eq.~\eqref{eq:DiscreteWaveEquation} on \([-5,5]^2\).}
\end{figure}

\begin{figure}
	\centering
	\includegraphics[width=\textwidth]{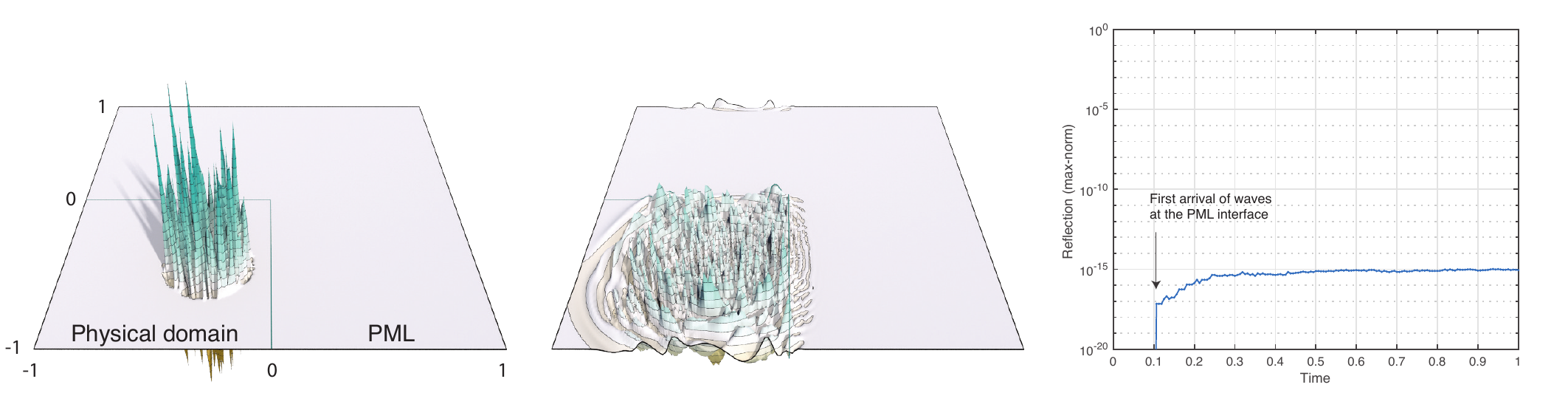}
	\caption{\label{fig:HalfSpaceRand}A simulation of Eq.~\eqref{eq:DiscretePML} with the same setup as \figref{fig:HalfSpaceGauss}, but with random damping coefficients \(\sigma_\di(\idx{i}_\di)\overset{\rm i.i.d.}{\sim}\operatorname{Unif}([0,\nicefrac{2}{\dx}])\), and a noisy initial condition (Gaussian white noise in a disk with variance \(\nicefrac{1}{4}\)). From left to right shows the initial condition at \(t=0\), the solution at \(t=0.5\), and the maximal reflection measured in the same way as in \figref{fig:HalfSpaceGauss}. Note that the numerical reflection is at machine zero despite that the waves are barely resolved on the grid.}
\end{figure}
As numerical validations, \figref{fig:HalfSpaceGauss} and \ref{fig:HalfSpaceRand} show simulations of Eq.~\eqref{eq:DiscretePML} on a 2D domain. In these examples, the multi-half-space is truncated into a finite domain \([-1,1]^2\ni\dx\bi\) with periodic boundary, so that the physical domain \([-1,0]^2\) is ``surrounded'' by the PML.  The \(4^\text{th}\)-order Runge-Kutta method (RK4) is adopted for time integration.  The numerical reflection is measured directly by the max error \(\|U - U_{\rm ref}\|_{\rm max}\) in the physical domain against a reference solution \(U_{\rm ref}\) solving \eqref{eq:DiscreteWaveEquation} with RK4 on a larger domain \([-\ell,\ell]^2, \ell\gg 1\).  \figref{fig:HalfSpaceGauss} shows a benchmark where the initial condition is a Gauss ``bump'' function, and the PML has a constant damping coefficient \(\sigma_\di(\idx{i}_\di) = \nicefrac{2}{\dx}\) for \(\idx{i}_\di\geq 0\).   \figref{fig:HalfSpaceRand} demonstrates a ``more challenging'' setup where the initial condition is a Gaussian white noise in a compact region, and each of the damping coefficients \((\sigma_\di(0),\sigma_\di(1),\ldots)\), \(\di=1,2\), are \emph{independently}, \emph{randomly} chosen.  In both numerical examples, the measured reflection is at machine zero when the waves enter the PML.  In particular, \figref{fig:HalfSpaceRand} validates that the reflectionless property holds regardless of the profile of \(\sigma_\di\) or the incident wave vector.

\subsubsection{Decay Rate and Residual Waves}
\label{sec:RoundtripReflections}

When the PML is truncated into a finite thickness such as described in \secref{sec:NumericalExamples}, a residual wave will re-enter the physical domain.  The magnitude of these remaining waves can be explicitly calculated by multiplicatively cumulating the one-grid decay rate \(\rho(\sigma_\di,k_\di,\omega)\) (Eq.~\eqref{eq:IntroDecayRate}). Since \(|\rho(\sigma_\di,k_\di,\omega)|<1\) for all \(\sigma_\di>0\) and traveling waves, presumably any choice of \(\sigma_\di\) results in an exponential decay in the residual waves as the PML is thicken linearly.
In practice, however, as one faces the decision of choosing \(\sigma_\di\) for an economic layer thickness, one must understand the basic
 relation between \(\rho\) and its arguments, which I now briefly explain. Practical advice about the choice of \(\sigma_\di\) will also be included. 
For simplicity, smooth asymptotics and some heuristic arguments may be applied.

The frequency variables \(k_\di\), \(\omega\) are linked by the \emph{discrete dispersion relation} \((\omega\dx)^2 = \sum_{\di=1}^d4\sin^2(\nicefrac{k_\di\dx}{2})\), which is the condition that the plane wave \(U(t,\bi) = \smash{e^{-\iu\omega t}e^{\iu\sum_{\di=1}^d k_\di\dx\idx{i}_\di}}\) solves Eq.~\eqref{eq:DiscreteWaveEquation}.  Using the dispersion relation, it is straightforward to see that in the smooth asymptotics \(k_\di\dx\sim O(\dx)\), we have \(\omega\dx\sim O(\dx)\), \((1-e^{\pm\iu k_\di\dx})\sim\mp\iu k_\di\dx\), and
\begin{align}
	\label{eq:RhoAsymptotics}
	\rho(\sigma_\di,k_\di,\omega)\sim\frac{2 - \sigma_\di\dx\frac{k_\di}{\omega}}{2+\sigma_\di\dx\frac{k_\di}{\omega}}.
\end{align}
Therefore, asymptotically the decay rate \(\rho\) depends only on \(\sigma_\di\dx\) and the incident angle \(\theta\coloneqq\cos^{-1}(\nicefrac{\revise{\re(k_\di)}}{\omega})\). Consequently we always specify the value of the product \(\sigma_\di\dx\) for the damping coefficients (\ie~\(\sigma_\di\sim O(\nicefrac{1}{\dx})\)) independent of the grid size \(\dx\).  One also recognizes that Eq.~\eqref{eq:RhoAsymptotics} is the \emph{\((1,1)\)-Pad{\'e} approximation} of the exponential function
\begin{align}
	\rho(\sigma_\di,k_\di,\omega)\sim
	\exp\left(-\sigma_\di\dx\tfrac{k_\di}{\omega}\right),
    \revise{
    \quad
	|\rho(\sigma_\di,k_\di,\omega)|\sim
	\exp\left(-\sigma_\di\dx\tfrac{\re(k_\di)}{\omega}\right),
    }
\end{align}
which is the well-known decay rate of the continuous PML (\cf~\secref{sec:ComplexCoordinateStretching} or \cite[Eq.~(8)]{Johnson:2008:NPM}) with \(\int_{x_\di}^{x_\di+\dx}\sigma_\di(y)\, dy = \sigma_\di\dx\).
\revise{For the case where the PML is backed with a reflecting boundary, the relavant decay rate \eqref{eq:IntroReflectedDecayBound} also admits a similar asymptote
\begin{align}
    \frac{\rho(\sigma_\di,k_\di,\omega)}{\rho\left(\sigma_\di,-\conj{k_\di},\omega\right)}\sim
    \frac
    {\left(2-\sigma_\di\dx\frac{\re(k_\di)}{\omega}\right)^2+\left(\sigma_\di\dx\frac{\im(k_\di)}{\omega}\right)^2}
    {\left(2+\sigma_\di\dx\frac{\re(k_\di)}{\omega}\right)^2+\left(\sigma_\di\dx\frac{\im(k_\di)}{\omega}\right)^2},\quad
    \frac{|\rho(\sigma_\di,k_\di,\omega)|}{\left|\rho\left(\sigma_\di,-\conj{k_\di},\omega\right)\right|}
    \sim
    \exp\left(-2\sigma_\di\dx\tfrac{\re(k_\di)}{\omega}\right).
\end{align}
}

An issue that has been critically discussed in the literature is the dependency of the PML performance on the incident angle.  When \(\nicefrac{\revise{\re(k_\di)}}{\omega}\to 0\), (\ie~\(\theta\to 90^\circ\),) one finds \(\rho\to 1\) (for both discrete and continuous versions) and thus there is no damping effect.  This is nevertheless fine because such a wave will also take longer time before it penetrates the layer, not to mention the damping effect by the PMLs of other dimensional components \cite[Sec.~7.2]{Johnson:2008:NPM}. To be precise, for a PML with thickness \(\delta\), the residual wave takes \(\nicefrac{\delta}{\cos(\theta)} = \nicefrac{\delta\omega}{\revise{\re(k_\di)}}\) time to re-enter the physical domain.  In practice, by using a moderately thick PML, the residual waves are sufficiently damped and delayed so that they will not be measured (in the double precision) within a long given simulation time (\figref{fig:Roundtrip}~(c)).  Note that such an argument does not apply to other discrete PMLs in previous work due to their numerical reflections at the interface.

\begin{figure}[t]
	\centering
	\includegraphics{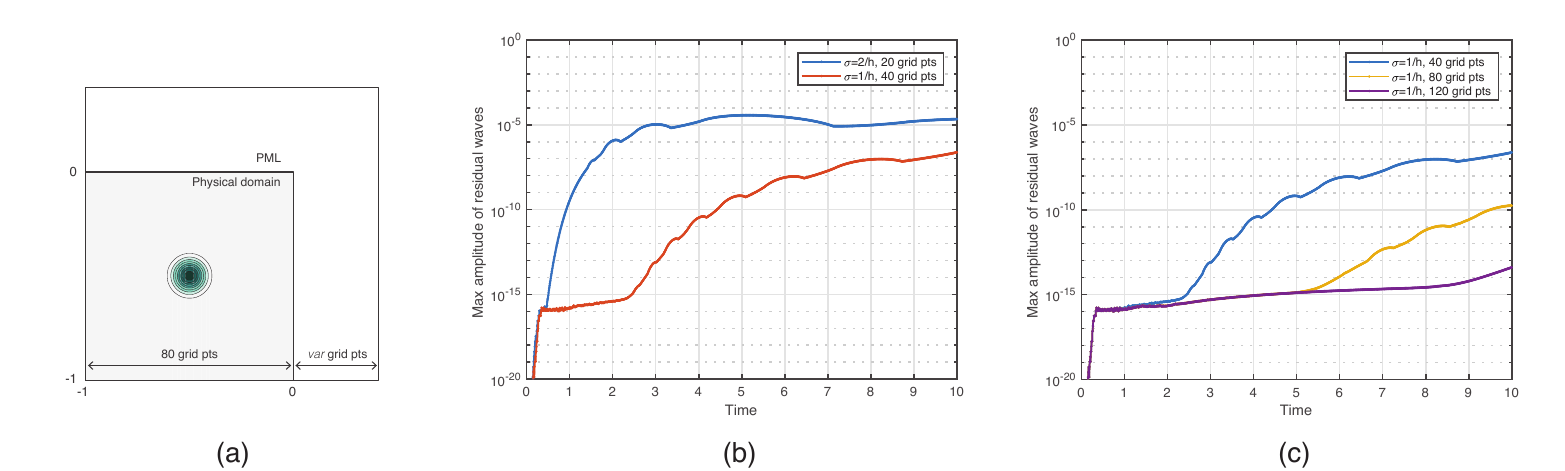}
	\caption{\label{fig:Roundtrip}Measurement of the residual waves. Eq.~\eqref{eq:DiscretePML} is simulated on the domain illustrated in (a) using a bump with peak value \(2\) as the initial condition. The constant damping coefficients and the thickness of the PML are specified in the legends  of (b), (c).  A remark on performance: Using a PML slightly greater than the domain size (\(\sigma=\nicefrac{1}{\dx}\), \(120\) grid points,) the solution of the PML system agrees in machine precision with the reference solution that has to be simulated on a domain \(10\) times greater in every dimension (wave speed \(=1\)).}
\end{figure}

How to choose the value \(\sigma_\di\dx\) for a good performance?  
In the smooth asymptotics, Eq.~\eqref{eq:RhoAsymptotics} suggests that \(\sigma_\di = \nicefrac{2}{\dx}\) gives the optimal performance for \(\theta = 0^\circ\) --- smooth waves with \(0^\circ\) incident angle are eliminated within one grid.  In the exact discrete formula, the norm of Eq.~\eqref{eq:IntroDecayRate} also assumes its minimum at \(s (= \sigma_\di) = \nicefrac{2}{\dx}\) among \([0,\nicefrac{2}{\dx}]\) for every fixed \((k_1,\ldots,k_d,\omega)\) satisfying the dispersion relation.  In principle, \(\sigma_\di = \nicefrac{2}{\dx}\) provides the optimal one-grid damping.  However, Eq.~\eqref{eq:IntroDecayRate} picks up a significant imaginary part that ``warps'' the complex phase of the solution \eqref{eq:IntroSolution}. For instance, when \(\sigma_\di\) is a constant, the component \(\rho^{\idx{i}_\di}\exp(\iu k_\di \dx \idx{i}_\di - \iu\omega t)\) in Eq.~\eqref{eq:IntroSolution} can be rewritten as \(|\rho|^{\idx{i}_\di}\exp(\iu(k_\di\dx + \arg\rho)\idx{i}_\di-\iu\omega)\). The modified group velocity emerging from the dispersion between \(\omega\) and \((k_\di\dx + \arg\rho)\) can cause the residual wave to re-enter the physical domain earlier.  Seeking a balance between rapid damping (\(\sigma_\di = \nicefrac{2}{\dx}\)) and large latency of the wave re-entry (\(\sigma_\di = 0\) which implies \(\arg(\rho)=0\)), one can simply take \(\sigma_\di = \nicefrac{1}{\dx}\).

\figref{fig:Roundtrip} demonstrates an open-space wave simulation in \([-1,0]^2\) with a simulation time interval \([0,10]\).  Let \(M\) denote the thickness of the PML in number of grid points.  \figref{fig:Roundtrip}~(b) shows that with the same amount of ``total damping'' \(\sum_{\idx{i}_\di=0}^{M-1}\sigma_\di(\idx{i}_\di)\dx\), the setup with \(\sigma_\di = \nicefrac{1}{\dx}\) gives a greater latency of the re-entering residual wave than the one with \(\sigma_\di = \nicefrac{2}{\dx}\).  Such a phenomenon should not be confused with the adiabatic effect due to a smoother profile of \(\sigma_\di\).  As shown in \figref{fig:Roundtrip}~(c), without any mesh refinement or rescaling of the value \(\sigma_\di = \nicefrac{1}{\dx}\), the residual wave is suppressed \emph{exponentially} as the thickness of PML increases \emph{linearly}.

In most applications the tolerance of the residual waves is at \(O(\dx^2)\), which is the size of the discretization error of the wave equation itself.  In those cases, as shown in \figref{fig:Roundtrip}, a thin PML with \(\sigma_\di = \nicefrac{2}{\dx}\) is sufficient.

\subsubsection{Stability}
\label{sec:Stability}
\begin{figure}[t]
	\centering
	\includegraphics[width=0.85\textwidth]{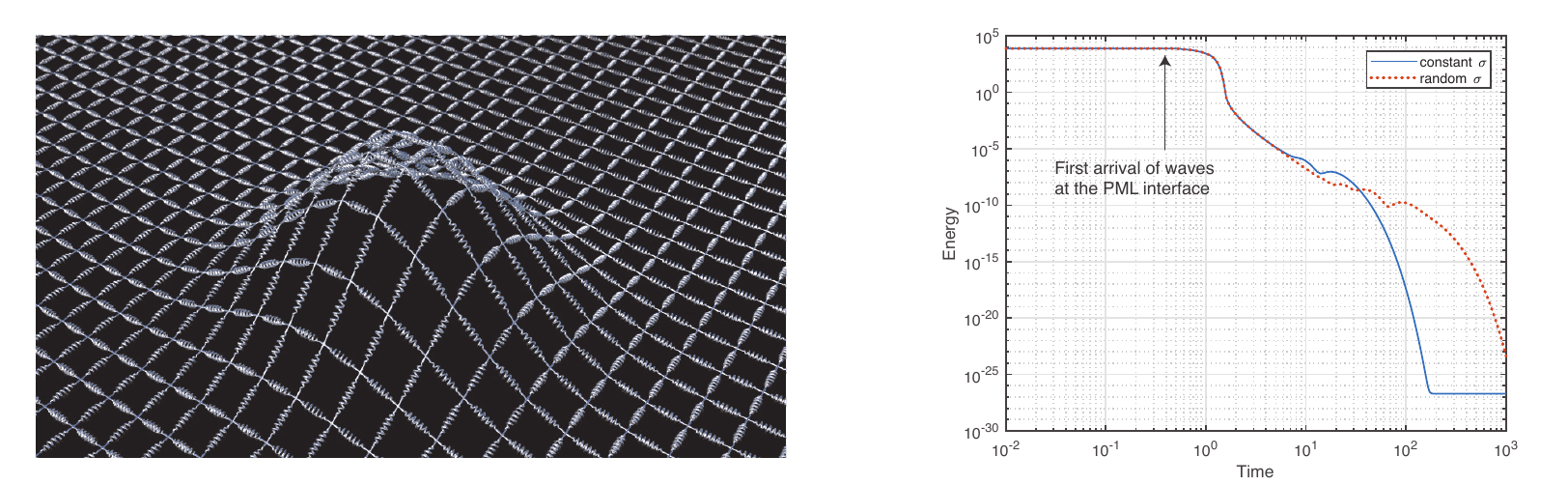}
	\caption{\label{fig:Energy}The discrete wave equation \eqref{eq:DiscreteWaveEquation} can be viewed as a spring mass system (left), providing a natural notion of total energy \eqref{eq:Energy}.  The plot (right) shows numerical stability by the long time behavior of the energy remained in the physical domain of \figref{fig:Roundtrip}~(a), with a constant \(\sigma=\nicefrac{2}{\dx}\), \(20\)-grid-point-thick PML, and a random \((\sigma_1(0),\ldots,\sigma_1(19),\sigma_2(0),\ldots,\sigma_2(19))\smash{\overset{\rm i.i.d.}{\sim}}\operatorname{Unif}([0,\nicefrac{2}{\dx}])\), \(20\)-grid-point-thick PML. }
\end{figure}
It is known that simulations of PML can exhibit temporal instability depending on the choice of damping coefficients \cite{Appelo:2006:PML}, how the PML is discretized \cite{Appelo:2006:NAL}, and how it is truncated into a finite domain \cite{Duru:2015:BCS}.  It is therefore necessary to check the numerical stability of Eq.~\eqref{eq:DiscretePML} truncated with periodic boundary.  The stability result is numerically confirmed by \figref{fig:Energy} by measuring the long time behavior of the energy of the system. Note that the system is stable even when the damping coefficients are random numbers.  The energy considered here is the sum of the kinetic and elastic energy associated to the discrete wave equation \eqref{eq:DiscreteWaveEquation} interpreted as a spring mass system (\figref{fig:Energy}~left):
\begin{align}
	\label{eq:Energy}
	\mathcal{E}\left[U,\tfrac{\partial U}{\partial t}\right](t) = \frac{1}{2}\sum_{\bi}\left|\frac{\partial U}{\partial t}(t,\bi)\right|^2 
	+ \frac{1}{2}\sum_{\bi}\sum_{\di=1}^d\left|\frac{(\tau_\di U)(t,\bi) - U(t,\bi)}{\dx}\right|^2,
\end{align} 
where the indices \(\bi\) iterate the sum over the grid points in the physical domain.  Note that this energy is conserved under \eqref{eq:DiscreteWaveEquation} when the physical domain is infinite.  On a truncated domain, the energy eventually decay to zero as all waves scatter away.  This phenomenon is stably captured by the truncated PML system \eqref{eq:DiscretePML} as shown in \figref{fig:Energy}~(right).

\subsubsection{Summary of Main Results}
A stable discrete PML \eqref{eq:DiscretePML} is proposed for the standardly discretized wave equation \eqref{eq:DiscreteWaveEquation}.  The PML creates no reflection at the interface at the discrete level.  This statement is given by both the explicit plane wave solutions \eqref{eq:IntroSolution} and numerical validations.   The numerically measured residual waves are suppressed exponentially below machine precision as the thickness of the layer grow linearly.  

To show the significance of the above results, we take a glimpse of the history of the related subject.

\subsection{Related Work}
\label{sec:RelatedWork}

Ever since numerical computations were possible, non-reflecting boundary treatments for wave equations have shown its importance in meteorology, seismology, acoustics, optics, electromagnetism, and quantum mechanics. Prior to the late 1970's, the common approach for wave absorbing boundary is adopting a ``one-sided difference'' (\ie~extrapolation), or imposing the \emph{Sommerfeld radiation boundary condition} \cite{Orlanski:1976:SBC}
\begin{align}
	\label{eq:Sommerfeld}
	\frac{\partial u}{\partial t} + \frac{\partial u}{\partial r} = 0\quad\text{at the boundary of the truncated domain,}
\end{align}
based on Sommerfeld's (1912) \cite{Sommerfeld:1912:GFS,Schot:1992:EYS} radiation asymptotics \(\nicefrac{\partial u}{\partial t} + \nicefrac{\partial u}{\partial r}\sim o(\nicefrac{1}{r^{(d-1)/2}})\) as \(r = |\bx|\to\infty\).  Unfortunately, Eq.~\eqref{eq:Sommerfeld} is reflectionless only for the one-dimensional problem.

In a multi-dimension setup, \citet{Fix:1978:VMU} generalized the Sommerfeld radiation boundary condition to a non-local condition, initiating a research area on reflectionless boundaries involving integral operators.  Non-local treatments can achieve the reflectionless property exactly in the continuous setup.  Methods of this class include explicitly removing the reflection by exploiting Kirchhoff's reflection formula \cite{Ting:1986:EBC} (non-local in time), or by projecting the solution of each time to the harmonics of the domain through boundary integrals or a Dirichlet-to-Neumann map \cite{Givoli:1990:NBC,Grote:1995:ENB,Sofronov:1998:ABC,Sofronov:2016:TTB} (non-local in space but local in time).   These non-local methods are also known as \emph{transparent boundary conditions}, which are seen today in, for example, computations of Schr\"odinger equations 
\cite{Antoine:2007:TAB,Mennemann:2014:PML}. 
The  \revise{main challenge} of the non-local methods is that  \revise{the exact evaluation of the integrals} are numerically expensive. 
\revise{Practical implementations require compressive numerical integrals such as multipole expansion and approximations of integral kernels \cite{Alpert:2000:REN, Alpert:2002:NBC,Jiang:2008:ERN} to reduce numerical complexity.}
From a strict perspective, the work known to the author on non-local exact boundary conditions has only reported ``substantial'' measured reflections compared with machine precision.

Another class of methods uses only local operators to improve the Sommerfeld boundary condition \eqref{eq:Sommerfeld}.  In 1977, Clayton, Engquist and Majda \cite{Clayton:1977:ABC,Engquist:1977:ABC,Engquist:1979:RBC} introduced the Absorbing Boundary Conditions (ABCs). To derive an ABC, one first writes a non-reflecting pseudodifferential boundary condition (which may be non-local in both space and time) based on the hyperbolic theory.  By truncating the Taylor or Pad\'e expansion of this pseudodifferential operator, one obtains a family of ABCs involving high-order differential operators.  Similar to the Engquist--Majda ABC family, \citet{Bayliss:1980:RBC} proposed a recursion operator that generates from \eqref{eq:Sommerfeld} an ever-improving hierarchy of boundary conditions.  ABCs were also derived for the discrete wave equation by \citet{Higdon:1986:ABC}.  ABCs are attractive due to their generality and numerical locality.  However,  any finite-order ABC is only an approximation towards reflectionless boundary condition even before discretization, unless the domain is 1D or the solution contains only finitely many harmonic components \cite{Hagstrom:1998:FAE}.   More non-reflecting boundary treatments can be found in the review articles by \citet{Givoli:1991:NBC,Givoli:2004:HLN} and \citet{Tsynkov:1998:NSP}.

In 1994, in the context of Maxwell's equations \citet{Berenger:1994:PML} introduced an artificial damping medium which has both electric and \emph{magnetic} conductivities, creating an effective impedance matching the vacuum.  This medium is a \emph{Perfectly Matched Layer} (PML). Attached to the boundary of a vacuum domain, PML becomes an ideal non-reflecting boundary treatment since it absorbs electromagnetic waves without creating any reflection at the interface. What is more, PML computation is local.  In the same year, PML was understood by \citet{Chew:1994:A3D} as a result of \emph{complex coordinate stretching}.  The technique of complex coordinate stretching is so general that it has taken PML to fluids and acoustics \cite{Hu:1996:ABC,Turkel:1998:APM}, elasticity and seismology \cite{Komatitsch:2003:PML}, waves on general coordinates \cite{Collino:1998:PML}, general linear hyperbolic systems \cite{Appelo:2006:PML}, Schr\"odinger equations \cite{Nissen:2011:OPM}, just to name a few.  Despite the success of PML, there has been rooms for improvement.  Until now, the main limitations of PML were the lost of perfect matching property after discretization and occasional numerical instability.

\citet{Berenger:1994:PML} reported that the discretized PML exhibits numerical reflections depending on the wave incident angles and damping coefficients, and thereby suggested a few profile functions for the damping coefficients.  One close attempt towards reflectionless discrete PML was given by \citet{Chew:1996:PML} by performing complex coordinate stretching to the already-discretized Maxwell's equations.  Unfortunately, without using Discrete Complex Analysis, numerical reflections remain, and optimization over damping parameters must be performed. 
With the objective of minimizing numerical reflection and attenuation rates, various discrete PML has been optimized over the damping coefficients  \cite{Fang:1996:CEN,Collino:1998:OPM,Winton:2000:SPC,Travassos:2006:OCP,Bermudez:2007:OPM,Nissen:2011:OPM}, stretching paths \cite{Becache:2004:LTB}, finite difference grids \cite{Asvadurov:2003:OFD}, and finite element meshes \cite{Chen:2003:AFE}. However, none of the known optimized PML comes close to eliminating numerical reflections entirely.

Numerical instabilities observed in a number of PMLs have brought about a continuously active research area. Early observations of instabilities were explained by the excessive number of auxiliary variables, which can take away strong well-posedness \cite{Abarbanel:1997:MAP}.  Stabilizing methods include introducing dissipation \cite{Hu:1996:ABC} and reformulating PMLs into ``unsplit'' forms (\emph{unsplit}-PML) \cite{Gedney:1996:APM,Petropoulos:2000:RSL,Abarbanel:2002:LTB} or convolution forms (\emph{convolutional}-PML) \cite{Roden:2000:CPM,Becache:2004:LTB,Komatitsch:2007:UCP}.  Later, the stability problem of PMLs are regarded as more complicated.  Mild instabilities were found even under the strong well-posedness condition \cite{Abarbanel:2002:LTB}, while classical split-form PMLs become stable again by using a \emph{multi-axial} PML \cite{Meza:2010:SNP}.  It is also discovered that severe instabilities can occur through backscattering in some anisotropic PMLs \cite{Becache:2003:SPM,Loh:2009:FRP}, leading to suggestions of adopting non-perfectly matching absorbers \cite{Oskooi:2008:FPM}.  Yet, by reformulation and carefully choosing the damping coefficients, researchers of \cite{Appelo:2006:NAL,Appelo:2006:PML,Becache:2017:SPM} showed that it is possible to regain stability for anisotropic PMLs.   More recent studies showed instabilities due to the far-end boundary of the PML, which are then stabilized by backing the PML with a dissipating boundary treatment \cite{Festa:2005:ISW,Deinega:2011:LTB,Duru:2015:BCS}.

\revise{
Along a similar discussion, the long-time performance of PML is more thoroughly analyzed through the complete spectral decomposition \cite{deHoop:2002:ABC,Diaz:2006:TDA,Hagstrom:2009:CRB}, \ie\ including the evanescent waves into the plane wave expansion \cite{Heyman:1996:TPS}.  Discretization based on complete spectral analysis can achieve long-time stability \cite{Chen:2012:LSC}.
}

A number of new approaches to non-reflecting boundaries are developed recently.  Hagstrom and coworkers \cite{Hagstrom:2014:DAB,Rabinovich:2015:DAB,Rabinovich:2017:DAB} propose a hybrid of ABC and PML named \emph{Double Absorbing Boundaries} (DAB).   Meanwhile, Druskin et~al.~\cite{Druskin:2013:KSC,Druskin:2014:EKS,Druskin:2016:NOP} improve PMLs by revisiting pseudodifferential operators and performing Krylov space analysis.  Another new absorbing boundary is designed by root-finding algorithms \cite{Lee:2018:ABC}.  However, a decent reflectionless boundary treatment with machine zero quality has not yet been achieved among these methods.

The present paper contributes a genuine discrete PML (Eq.~\eqref{eq:DiscretePML}) for the discrete wave equation that has no numerical reflection at all.  Residual waves can be suppressed exponentially just as the original continuous theory by \citet{Berenger:1994:PML}.  It is numerically stable, and remarkably simple to implement.

\section{Preliminaries}
As a preparation for the main derivation in \secref{sec:DerivationOfDiscretePML}, we recall the method of complex coordinate stretching for deriving the continuous PML equations, followed by some basic notions from Discrete Complex Analysis.
 
\subsection{Complex Coordinate Stretching}
\label{sec:ComplexCoordinateStretching}
We begin by applying Fourier transform in time to \eqref{eq:WaveEquation}:
\begin{align}
	\label{eq:WaveEquationFourier}
	-\omega^2\hat{u}(\omega,\bx) = \sum_{\di=1}^d\frac{\partial^2\hat u}{\partial x_\di^2}(\omega,\bx).
\end{align}
Then we \emph{complexify} the equation by extending the domain of the function \(\hat{u}(\omega,\cdot)\) from \(\RR^d\) to \(\CC^d\) and rewriting \eqref{eq:WaveEquationFourier} as
\begin{align}
	\label{eq:WaveEquationComplex}
	-\omega^2f(\bz) = \sum_{\di=1}^d\frac{\partial^2 f}{\partial z_\di^2}(\bz),\quad \frac{\partial f}{\partial \conj z_\di}(\bz) = 0,
\end{align}
where \(\bz = (z_1,\ldots,z_d)\) denotes the coordinate for \(\CC^d\), and \(\nicefrac{\partial}{\partial z_\di}\) (\resp~\(\nicefrac{\partial}{\partial\conj z_\di}\)) the holomorphic (\resp~anti-holomorphic) differential operator along each component.
Equation \eqref{eq:WaveEquationComplex} is an extension of \eqref{eq:WaveEquationFourier} in the sense that given any solution \(f\) to \eqref{eq:WaveEquationComplex} and real lines \(\ell_\di\colon\RR\hookrightarrow\CC\), \(\ell_\di\colon x\mapsto x+0\,\iu\), the composition \(\hat u(\omega,\bx)\coloneqq f(\ell_1(x_1),\ldots,\ell_d(x_d))\) is a solution to \eqref{eq:WaveEquationFourier}. Conversely, a solution \(\hat{u}(\omega,\bx)\) to \eqref{eq:WaveEquationFourier} gives rise to a solution \(f(\bz)\) to \eqref{eq:WaveEquationComplex} via analytic continuation. 
This follows from separating variables and the theory of linear differential equations with analytic coefficients (in this case constant).
In particular, the  \revise{traveling} wave solutions \(\exp(\iu\bk\cdot\bx)\) of \eqref{eq:WaveEquationFourier} are uniquely extended to
\begin{align}
	\label{eq:PlaneWave}
	&f_\bk(\bz) = \exp\left(\iu\bk\cdot\bz\right),\quad\bk = (k_1,\ldots,k_d)\in\revise{\CC^d},\quad\sum_{\di=1}^d k_{\di}^2 = \omega^2.
\end{align}

\begin{figure}
	\centering
	\includegraphics[width=0.5\textwidth]{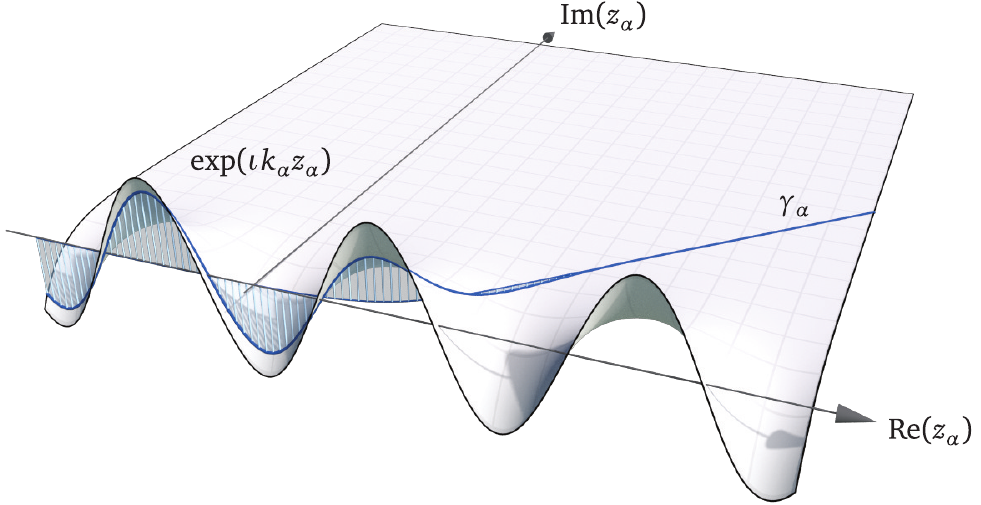}
	\caption{\label{fig:SmoothStretching}The analytic continuation of the plane wave, \(\CC\ni z_\di\mapsto\exp(\iu k_\di z_\di)\), is re-evaluated along a curve \(\gamma_\di\colon\RR\rightarrow\CC\) (Eq.~\eqref{eq:ContinuousCurve}).}
\end{figure}

Now, similar to the wave equation \eqref{eq:WaveEquationFourier} being the restriction of its complex extension \eqref{eq:WaveEquationComplex} on the real lines \(\ell_\di\), a PML equation is the restriction of \eqref{eq:WaveEquationComplex} on a set of more general curves
\begin{align}
	\label{eq:ContinuousCurve}
	\gamma_\di\colon\RR\rightarrow\CC,\quad\gamma_\di(x) = x+\tfrac{b_\di(x)}{\omega}\iu,\quad b_\di(x)\vert_{x\leq 0} \equiv 0,
\end{align}
where \(b_\di\colon\RR\rightarrow\RR\) are some non-decreasing absolutely continuous functions (\figref{fig:SmoothStretching}).  The purpose of the factor \(\nicefrac{1}{\omega}\) will be clear in a moment.  Substituting \eqref{eq:PlaneWave}, we have the plane wave solution \(\hat{u}_{\bk}(\omega,\bx) = f_\bk(\gamma_1(x_1),\ldots,\gamma_d(x_d))\) evaluated as
\begin{align}
	\label{eq:ContinuousDecayRate}
	\hat{u}_\bk(\omega,\bx) &= \exp\left(\iu\sum_{\di=1}^d k_\di\Big(x_\di+\tfrac{b_\di(x_\di)}{\omega}\iu\Big)\right)
	= \exp\left(-\sum_{\di=1}^d \tfrac{k_\di}{\omega} b_\di(x_\di)\right)\exp(\iu\bk\cdot\bx).
\end{align}
Since \(b_\di\) is non-decreasing, the scaling factor \(\exp(-\sum_{\di=1}^d(\nicefrac{\revise{\re(k_\di)}}{\omega})b_\di(x_\di))\) decays as (i) \(x_\di\) increases when \(\nicefrac{\revise{\re(k_\di)}}{\omega}>0\); and (ii) \(x_\di\) decreases when \(\nicefrac{\revise{\re(k_\di)}}{\omega}<0\). That is, any traveling wave is attenuated along its traveling direction. Moreover, the decay rate  \((\nicefrac{\revise{\re(k_\di)}}{\omega})b_\di'(x_\di)\) only depends on the parameter \(b_\di'\) and the wave direction \(\nicefrac{\revise{\re(\bk)}}{|\bk|}=\nicefrac{\revise{\re(\bk)}}{|\omega|}\); in particular it does not depend on the scale of the frequency \(|\bk|=|\omega|\).  The parameter \(\sigma_\di(x)\coloneqq b_\di'(x)\) (a nonnegative locally integrable function) is the \emph{PML damping coefficient}.  Note that the attenuated wave is just the original plane wave evaluated differently on the complex domain. In particular, when a  wave travels across regions with varying damping coefficients, no reflection is produced.

Finally, the PML equations are derived by changing variables for \eqref{eq:WaveEquationComplex}. Holomorphicity of \(f\) yields
\begin{align*}
	\frac{\partial f}{\partial z_\di}(\gamma_1(x_1),\ldots,\gamma_d(x_d)) &= \frac{1}{\gamma_\di'(x_\di)}\frac{\partial}{\partial x_\di}\big(f(\gamma_1(x_1),\ldots,\gamma_d(x_d))\big)
	=\frac{1}{1+\frac{\iu}{\omega}\sigma_\di(x_\di)}\frac{\partial}{\partial x_\di}\big(f(\gamma_1(x_1),\ldots,\gamma_d(x_d))\big).
\end{align*}
Therefore, by substituting \(\hat u(\omega,\bx) = f(\gamma_1(x_1),\ldots,\gamma_d(x_d))\), we have
\begin{align*}
	\frac{\partial f}{\partial z_\di}(\gamma_1(x_1),\ldots,\gamma_d(x_d)) &=
	\frac{1}{1+\frac{\iu}{\omega}\sigma_\di(x_\di)}\frac{\partial\hat u}{\partial x_\di}(\omega,\bx).
\end{align*}
Again by holomorphicity of \(\nicefrac{\partial f}{\partial z_\di}\), 
\begin{align*}
	\frac{\partial^2 f}{\partial z_\di^2}(\gamma_1(x_1),\ldots,\gamma_d(x_d)) = 
	\frac{1}{1+\frac{\iu}{\omega}\sigma_\di(x_\di)}\frac{\partial}{\partial x_\di}
	\left(
	\frac{1}{1+\frac{\iu}{\omega}\sigma_\di(x_\di)}\frac{\partial}{\partial x_\di}\hat{u}(\omega,\bx)
	\right).
\end{align*}
Therefore, Eq.~\eqref{eq:WaveEquationComplex} restricted along the curves \(\gamma_\di\) is given by
\begin{align}
	\label{eq:PMLDerivation0}
	-\omega^2\hat{u} = \sum_{\di=1}^d\frac{1}{1+\frac{\iu}{\omega}\sigma_\di(x_\di)}\frac{\partial}{\partial x_\di}
	\left(
	\frac{1}{1+\frac{\iu}{\omega}\sigma_\di(x_\di)}\frac{\partial\hat u}{\partial x_\di}
	\right).
\end{align} 

Now, the remaining steps are to rearrange \eqref{eq:PMLDerivation0} into a form which can be transformed back to the time domain.  Dividing both sides of Eq.~\eqref{eq:PMLDerivation0} by \(-\iu\omega\) yields
\begin{align}
	\label{eq:PMLDerivation1}
	-\iu\omega\hat u &=
	\sum_{\di=1}^d\frac{1}{1+\frac{\iu}{\omega}\sigma_\di(x_\di)}\frac{\partial}{\partial x_\di}
		\left(
		\frac{1}{-\iu\omega+\sigma_\di(x_\di)}\frac{\partial\hat u}{\partial x_\di}
		\right)
		=
		-\sum_{\di=1}^d \frac{1}{1+\frac{\iu}{\omega}\sigma_\di(x_\di)}\frac{\partial\hat v_\di}{\partial x_\di}
\end{align}
where we define 
\begin{align}
	\label{eq:VHatDefinition}
	\hat v_\di(\omega,\bx) \coloneqq  \frac{1}{\iu\omega}\frac{\partial f}{\partial z_\di}(\gamma_1(x_1),\ldots,\gamma_d(x_d))
	= -\frac{1}{-\iu\omega+\sigma_\di(x_\di)}\frac{\partial\hat u}{\partial x_\di}(\omega,\bx),
\end{align} 
or equivalently, we impose the equations
\begin{align}
	\label{eq:PMLMomentumEquationFourier}
	-\iu\omega\hat v_\di = -\sigma_\di(x_\di)\hat v_\di - \frac{\partial\hat u}{\partial x_\di},\quad \di = 1,\ldots,d.
\end{align}
Using the algebraic fact 
\[\frac{1}{1+\frac{\iu}{\omega}\sigma_\di(x_\di)} = 1-\frac{\sigma_\di(x_\di)}{-\iu\omega + \sigma_\di(x_\di)},\] 
rewrite Eq.~\eqref{eq:PMLDerivation1} as
\begin{align}
	\label{eq:PMLDerivation2}
	-\iu\omega\hat{u} &= -\sum_{\di=1}^d\frac{\partial\hat v_\di}{\partial x_\di} + \sum_{\di=1}^d\frac{\sigma_\di(x_\di)}{-\iu\omega + \sigma_\di(x_\di)}\frac{\partial\hat v_\di}{\partial x_\di}
	=-\sum_{\di=1}^d\frac{\partial\hat v_\di}{\partial x_\di} +\sum_{\di=1}^d \hat q_\di
\end{align}
by introducing auxiliary variables 
\[\hat q_\di(\omega,\bx)\coloneqq \frac{\sigma_\di(x_\di)}{-\iu\omega + \sigma_\di(x_\di)}\frac{\partial\hat v_\di}{\partial x_\di}(\omega,\bx),\] 
or equivalently,
\begin{align}
	\label{eq:PMLAuxEquationFourier}
	-\iu\omega\hat q_\di = -\sigma_\di(x_\di)\hat q_\di + \sigma_\di(x_\di)\frac{\partial\hat v_\di}{\partial x_\di}.
\end{align}
Combining \eqref{eq:PMLDerivation2}, \eqref{eq:PMLMomentumEquationFourier} and \eqref{eq:PMLAuxEquationFourier} and transforming them into the time domain, we arrive at the PML equations:
\begin{subequations}
	\label{eq:PMLEquation}
\begin{align}[left=\empheqlbrace]
	    \label{eq:PMLEquationA}
		&\frac{\partial u}{\partial t} = -\sum_{\di=1}^d \frac{\partial v_\di}{\partial x_\di} + \sum_{\di=1}^d q_\di\\
		\label{eq:PMLEquationB}
		&\frac{\partial v_\di}{\partial t} = -\sigma_\di(x_\di) v_\di - \frac{\partial u}{\partial x_\di}\\
		\label{eq:PMLEquationC}
		&\frac{\partial q_\di}{\partial t} = -\sigma_\di(x_\di) q_\di + \sigma_\di(x_\di)\frac{\partial v_\di}{\partial x_\di}.
\end{align}
\end{subequations}

In a region where \(\sigma_\di = 0\) for all \(\di=1,\ldots,d\), the auxiliary variables \(q_\di(t,\bx)\) is set to zero; in this case Eq.~\eqref{eq:PMLEquation} becomes the acoustic wave equation (linearized barotropic Euler equations)
\begin{align*}[left=\empheqlbrace]
		&\frac{\partial u}{\partial t} = -\sum_{\di=1}^d\frac{\partial v_\di}{\partial x_\di}\\
		&\frac{\partial v_\di}{\partial t} = -\frac{\partial u}{\partial x_\di},
\end{align*}
which is equivalent to the scalar wave equation \eqref{eq:WaveEquation}.  In particular, from the acoustics viewpoint, \(v_\di\) is equipped with a physical interpretation of \emph{material velocity}.  

Since the wave equation is just the spacial case of the PML equations with zero damping, and a spatially varying damping coefficient generates no reflection, the PML equations \eqref{eq:PMLEquation} ``perfectly match'' the wave equation and absorb all incident waves.

Readers who hasten to compare Eq.~\eqref{eq:PMLEquation} with Eq.~\eqref{eq:DiscretePML} may not find consistency obvious.  Note that both \(\Phi_\di(t,\bi)\) and \(\Psi_\di(t,\bi)\) approximate \(v_\di(t,\bx)\), as Eq.~\eqref{eq:DiscretePMLb} and Eq.~\eqref{eq:DiscretePMLc} are both discretizations of Eq.~\eqref{eq:PMLEquationB}. Also, Eq.~\eqref{eq:PMLEquationA} is in a first-order form while Eq.~\eqref{eq:DiscretePMLa} is second order in time. Nonetheless, a clear analogy between Eq.~\eqref{eq:DiscretePML} and Eq.~\eqref{eq:PMLEquation} exists in the complex domain, which we will see in \secref{sec:DerivationOfDiscretePML}.

\subsection{Discrete Complex Analysis}
\label{sec:DiscreteComplexAnalysis}

Here let me introduce a few basic notions in Discrete Complex Analysis. In this study, the domains of complex-valued functions are replaced by quadrilateral graphs in \(\CC\). With the ``right'' definitions for discrete holomorphicity and discrete differential forms, one discovers a profound theory where discrete versions of Cauchy's Integral Theorem, conformality results, \etc, hold in the exact sense.  For our application to discrete PMLs, we take only the essential definitions that allow us to calculate the analytic continuation of discrete plane waves.  Without exploiting the full theory of Discrete Complex Analysis, the readers are pointed to \cite{Duffin:1956:BPD,Bobenko:2005:LNT,Lovasz:2004:DAF,Bobenko:2016:DCA} for survey and further study.

For simplicity we consider an infinite quadrilateral lattice \(\Lambda\) indexed by \(\ZZ^2\). The lattice is described by a set of vertices \(V(\Lambda) = \ZZ^2\), a set of edges
 \(E(\Lambda) = \{\idx{e_{i+\half,j}},\idx{e_{i,j+\half}}\,\vert\,(\idx{i,j})\in\ZZ^2\}\) where
\begin{align*}
	\idx{e_{i+\half,j}} \coloneqq \big((\idx{i,j}),(\idx{i+1,j})\big),\quad
	\idx{e_{i,j+\half}} \coloneqq \big((\idx{i,j}),(\idx{i,j+1})\big)
\end{align*}
are the horizontal and vertical edges, and a set of faces \(F(\Lambda) = \{Q_\idx{i+\half,j+\half}\,\vert\,(\idx{i,j})\in\ZZ^2\}\) where each \(Q_\idx{i+\half,j+\half}\) is the quadrilateral
\begin{align*}
	Q_\idx{i+\half,j+\half} \coloneqq \big((\idx{i,j}),(\idx{i+1,j}),(\idx{i+1,j+1}),(\idx{i,j+1})\big).
\end{align*}
The above combinatorial lattice can be realized as a graph in \(\CC\) by assigning vertex positions \(z\colon V(\Lambda)\rightarrow\CC\).  Through linear interpolation the edges in \(E(\lambda)\) are mapped to straight lines connecting the vertex positions. 
\revise{Note that these images of edges can cross or overlap in general.}
The lattice \(\Lambda\) with the structure \(z= (z_\idx{i,j})_{(\idx{i,j})\in\ZZ^2}\) 
characterizes a \emph{discrete complex domain} \((\Lambda,z)\).

A complex-valued function \(f\) on \(\Lambda\) is simply another quadrilateral graph realized in \(\CC\) given by values assigned to vertices \( f = (f_\idx{i,j})_{(\idx{i,j})\in\ZZ^2}\).

\begin{figure}
	\centering
	\includegraphics[width=0.4\textwidth]{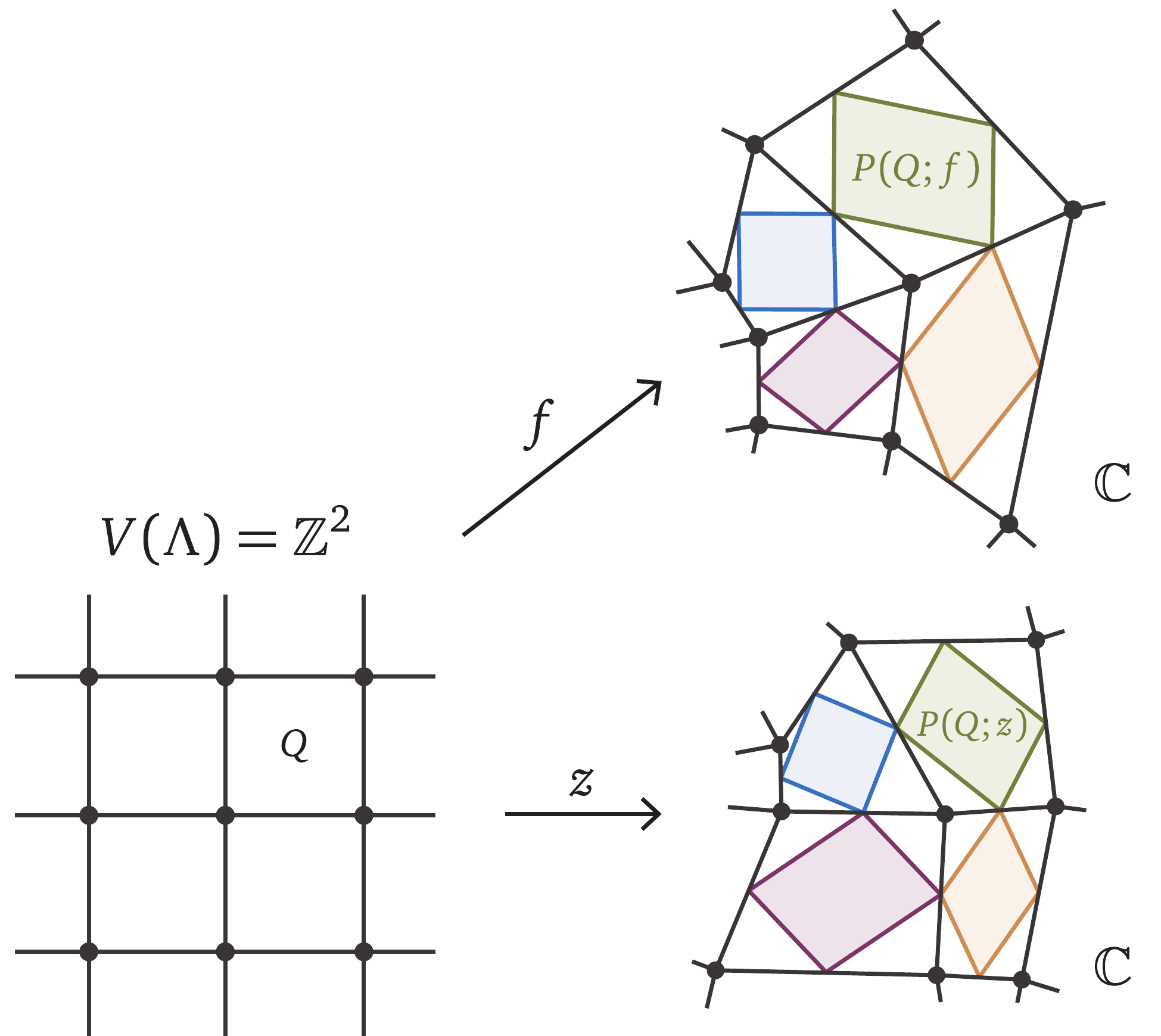}
	\caption{\label{fig:DCA}A function \(f\colon V(\Lambda)\rightarrow\CC\) is discretely holomorphic with respect to a complex domain \((\Lambda,z)\), \(z\colon V(\Lambda)\rightarrow\CC\), if each of the medial parallelograms in the graph mapped by \(f\) is a scaled rotation of the corresponding one mapped by \(z\).}
\end{figure}

\begin{definition}[Discrete holomorphicity]
	\label{def:ComplexDifferentiable}
	A complex-valued function \(f\) on \(\Lambda\) is said to be \emph{complex-differentiable with respect to \(z\)} at a face \(Q_\idx{i+\half,j+\half}\in F(\Lambda)\) if \(f\) satisfies the \emph{discrete Cauchy--Riemann equation}
	\begin{align}
		\label{eq:DiscreteCauchyRiemann}
		\frac{f_\idx{i+1,j+1}-f_\idx{i,j}}{z_\idx{i+1,j+1}-z_\idx{i,j}} = 
		\frac{f_\idx{i,j+1}-f_\idx{i+1,j}}{z_\idx{i,j+1}-z_\idx{i+1,j}}.
	\end{align}
	In this case, the \emph{discrete complex derivative} \(D_zf (Q_\idx{i+\half,j+\half})\) is defined as the value of \eqref{eq:DiscreteCauchyRiemann}.
	The function \(f\) is said to be \emph{discretely holomorphic} on \((\Lambda,z)\) if \(f\) is complex-differentiable with respect to \(z\) at every face in \(F(\Lambda)\).
\end{definition}

A clear geometric picture for \defref{def:ComplexDifferentiable} will be given by \propref{prop:GeometricHolomorphicity} in terms of the conformality of the \emph{medial parallelogram} of each face (\figref{fig:DCA}).

\begin{definition}[Medial parallelogram]
	For a given quadrilateral graph \((\Lambda,z)\), let \(z_\idx{i+\half,j} = {(z_\idx{i,j}+z_\idx{i+1,j})}\nicefrac{}{2}\) and \(z_\idx{i,j+\half} = (z_\idx{i,j}+z_\idx{i,j+1})\nicefrac{}{2}\) denote the midpoint on each edge in \(E(\Lambda)\). For each face \(Q_\idx{i+\half,j+\half}\in F(\Lambda)\), the \emph{medial parallelogram} is given by connecting the midpoints of the incident edges
	\begin{align*}
		P(Q_\idx{i+\half,j+\half};z)\coloneqq \big(z_\idx{i+\half,j},z_\idx{i+1,j+\half},z_\idx{i+\half,j+1},z_\idx{i,j+\half}\big).
	\end{align*}
\end{definition}

Note that \(P(Q_\idx{i+\half,j+\half};z)\) is always a parallelogram, since its four edge vectors are pairwise given explicitly as \((z_\idx{i+1,j+1} - z_\idx{i,j})\nicefrac{}{2}\) and \((z_\idx{i,j+1} - z_\idx{i+1,j})\nicefrac{}{2}\).  

\begin{proposition}
	\label{prop:GeometricHolomorphicity}
	A complex-valued function \(f\) on \(\Lambda\) is complex-differentiable with respect to \(z\) at \(Q\in F(\Lambda)\) if and only if the medial parallelogram \(P(Q;f)\) is a scaled rotation of \(P(Q;z)\). When both statements hold, the latter scaled rotation, described as a complex number, equals to the discrete complex derivative \(D_zf(Q)\). 
\end{proposition}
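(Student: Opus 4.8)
The plan is to reduce the claim to an elementary computation of the edge vectors of the two medial parallelograms, after which the equivalence collapses to a one-line algebraic identity. Writing $Q = Q_{i+\half,j+\half}$, I would abbreviate the two \emph{diagonal} vectors of the $z$-quadrilateral by $a \coloneqq z_{i+1,j+1} - z_{i,j}$ and $b \coloneqq z_{i,j+1} - z_{i+1,j}$, and likewise $A \coloneqq f_{i+1,j+1} - f_{i,j}$, $B \coloneqq f_{i,j+1} - f_{i+1,j}$ for $f$. In this notation the discrete Cauchy--Riemann equation \eqref{eq:DiscreteCauchyRiemann} reads simply $A/a = B/b$, and its common value is $D_z f(Q)$.

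First I would compute the consecutive edge vectors of $P(Q;z)$ directly from the midpoint formulas $z_{i+\half,j} = (z_{i,j}+z_{i+1,j})/2$, $z_{i,j+\half} = (z_{i,j}+z_{i,j+1})/2$, and their analogues. Telescoping cancellations give $z_{i+1,j+\half} - z_{i+\half,j} = a/2$ and $z_{i+\half,j+1} - z_{i+1,j+\half} = b/2$, with the remaining two edges equal to $-a/2$ and $-b/2$, so that $P(Q;z)$ indeed closes up into a parallelogram spanned by $a/2$ and $b/2$; this is just the classical fact that the side-midpoints of a quadrilateral form a parallelogram whose sides are half its diagonals. The identical computation applied to $f$ produces the edge vectors $A/2$ and $B/2$ of $P(Q;f)$.

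The equivalence then follows at once. A scaled rotation is multiplication by a single complex number $\lambda$, and since a parallelogram is determined up to translation by its ordered pair of edge vectors, $P(Q;f)$ is the scaled rotation of $P(Q;z)$ by $\lambda$ precisely when $A/2 = \lambda\,(a/2)$ and $B/2 = \lambda\,(b/2)$, i.e.\ when $A = \lambda a$ and $B = \lambda b$. Eliminating $\lambda$ gives $A/a = B/b$, which is \eqref{eq:DiscreteCauchyRiemann}; conversely the common ratio in \eqref{eq:DiscreteCauchyRiemann} supplies the factor $\lambda = D_z f(Q)$. This establishes both implications and identifies the multiplier with the discrete complex derivative.

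Since the computation is trivial, the only point demanding care---and where I would be most precise---is the meaning of ``scaled rotation.'' One must insist that the \emph{same} $\lambda$ carry the first edge of $P(Q;z)$ to the first edge of $P(Q;f)$ and the second edge to the second, excluding edge-swaps and orientation-reversing similarities; it is exactly this matched, orientation-preserving correspondence that encodes the single value of $D_z f(Q)$. I would also record two boundary cases: the equivalence is cleanest when $P(Q;z)$ is non-degenerate (its diagonals $a,b$ nonzero and $\RR$-linearly independent), which holds for every face of the grid of interest, while the degenerate value $\lambda = 0$ (so $A = B = 0$ and $P(Q;f)$ collapses to a point) is read consistently as the scaled rotation with multiplier $D_z f(Q) = 0$. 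If one wishes to avoid division altogether, the cross-multiplied form $A\,b = B\,a$ expresses the same condition with no non-degeneracy hypothesis at all.
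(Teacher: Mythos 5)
Your proof is correct and follows exactly the route the paper intends: the paper's remark preceding the proposition already observes that the edge vectors of \(P(Q;z)\) are the half-diagonals \((z_{\idx{i}+1,\idx{j}+1}-z_{\idx{i},\idx{j}})/2\) and \((z_{\idx{i},\idx{j}+1}-z_{\idx{i}+1,\idx{j}})/2\), from which the equivalence with the discrete Cauchy--Riemann equation \eqref{eq:DiscreteCauchyRiemann} and the identification of the multiplier with \(D_zf(Q)\) follow just as you compute. Your added care about matched, orientation-preserving edge correspondence and the degenerate case \(\lambda=0\) goes slightly beyond what the paper spells out, but it is consistent with it.
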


In the next section, we use the notion of discrete holomorphicity to study a discrete analog of analytic continuation of plane waves, and from there we derive the discrete PML equations \eqref{eq:DiscretePML}.

\section{Derivation of the Discrete PML Equations}
\label{sec:DerivationOfDiscretePML}

In this section, we derive the PML equations \eqref{eq:DiscretePML} for the discrete wave equation \eqref{eq:DiscreteWaveEquation}. 
Similar to \secref{sec:ComplexCoordinateStretching}, we set out the derivation by applying the Fourier transform in \(t\) to \eqref{eq:DiscreteWaveEquation}:
\begin{align}
	\label{eq:DiscreteWaveEquationFourier}
	-\omega^2\hat U = \sum_{\di=1}^d \frac{1}{\dx^2}\left(-2\hat{U} + \tau_\di^{-1}\hat U + \tau_\di\hat U\right),\quad\hat U = \hat U(\omega,\bi),\quad\bi=(\idx{i}_1,\ldots,\idx{i}_d)\in\ZZ^d.
\end{align}
The  \revise{elementary} wave solutions to \eqref{eq:DiscreteWaveEquationFourier}
\revise{bounded in the half space \(\bigcap_{\di=1}^d\{\idx{i}_\di<0\}\)}
 are given by the modal functions \(\hat{U}_{\bk}(\bi)\) defined for wave numbers \(\bk=(k_1,\ldots,k_d)\in \revise{\cK^d}\) \revise{(\cf~\figref{fig:KDomain})} as
\begin{align}
	\label{eq:DiscreteMode}
	\hat{U}_{\bk}(\bi) = \prod_{\di=1}^d W_{k_\di}(\idx{i}_\di),\quad 
	W_{k}(\idx{i})\coloneqq 
		e^{\iu k \dx\idx{i}}.
\end{align}
%
For each \(\di=1,\ldots,d\), 
\(W_{k_\di}(\idx{i}_\di)\)
is an eigenvector of the difference operator \((-2+\tau_\di^{-1}+\tau_\di)/\dx^2\) with eigenvalue \(-\nicefrac{4}{\dx^2}\sin^2(\nicefrac{k_\di\dx}{2})\), turning the difference equation \eqref{eq:DiscreteWaveEquationFourier} into a dispersion relation 
\begin{align}
	\label{eq:DiscreteDispersion}
	\omega^2\dx^2 = \sum_{\di=1}^d 4\sin^2\left(\tfrac{ k_\di\dx}{2}\right).
\end{align}

The goal is to find a ``complexification'' of \eqref{eq:DiscreteWaveEquationFourier}.
That is, we seek a difference equation on a discrete complex domain whose normal modes are the \emph{discrete analytic continuation} of \eqref{eq:DiscreteMode}.
More precisely, for each dimensional component \(\di\) of the domain \(\ZZ^d\), we will construct a discrete complex domain \((\Lambda,z^{(\di)})\) that supports discrete functions of \(d\) complex variables
\(
	f\colon (V(\Lambda))^d\rightarrow\CC,
\)
and extend the difference operator \((-2+\tau_\di^{-1}+\tau_\di)/\dx^2\) so that it operates on \(f\).  Once this step is established, we pose the \emph{complexified discrete wave equation}:
\begin{subequations}
\label{eq:ComplexifiedDiscreteWave}
	\begin{align}
		\label{eq:ComplexifiedDiscreteWaveA}
		&-\omega^2 f = \sum_{\di=1}^d \frac{1}{\dx^2}\left(-2 + \tau_\di^{-1} + \tau_\di\right) f,\quad f\colon (V(\Lambda))^d\rightarrow\CC,\\
		\label{eq:ComplexifiedDiscreteWaveB}
		&(\idx{i}_\di,\idx{j}_\di)\mapsto f\left((\idx{i}_1,\idx{j}_1),\ldots,(\idx{i}_\di,\idx{i}_\di),\ldots(\idx{i}_d,\idx{j}_d)\right)\text{ is discretely holomorphic with respect to \(z^{(\di)}\)}.
	\end{align}
\end{subequations}
By separation of variables, the normal modes of Eq.~\eqref{eq:ComplexifiedDiscreteWave} are given by the holomorphic eigenfunctions of the extended difference operator \((-2+\tau_\di^{-1}+\tau_\di)/\dx^2\).  As will be shown, each plane wave \eqref{eq:DiscreteMode} is uniquely continued into one of these holomorphic eigenfunctions.  From there, we can restrict the equation to a path in analogy to \secref{sec:ComplexCoordinateStretching} and arrive at a set of PML equations.

 Before we reprise the complexified discrete wave equation \eqref{eq:ComplexifiedDiscreteWave} and the rest of the derivations in \secref{sec:DiscreteComplexCoordinateStretching}, we devote \secref{sec:DiscreteAnalyticContinuationOfPlaneWaves} to developing the  \revise{traveling} wave theory on a specifically designed discrete complex domain.

\subsection{Discrete Analytic Continuation of \revise{Traveling} Waves}
\label{sec:DiscreteAnalyticContinuationOfPlaneWaves}
For clarity of exposition here and in \secref{sec:StretchingPath}, we focus only on the \(\di^\text{th}\) direction of the original domain \(\ZZ^d\).  
  We will drop the subscript \(\di\) whenever it causes no confusion.  At times we use the notation \(\CC^D = \operatorname{Map}(D;\CC) = \{\varphi\colon D\rightarrow\CC\}\) denoting the space of all \(\CC\)-valued functions defined on a domain \(D\). With the obvious notion, \(\CC^D\) is an algebra over \(\CC\) with pointwise multiplication.

We consider the lattice \(\Lambda\), \(V(\Lambda) = \ZZ^2\), as described in \secref{sec:DiscreteComplexAnalysis}, and design a set of vertex positions \(z\colon V(\Lambda)\rightarrow\CC\),
\begin{align}
	\label{eq:VertexPositions}
	z_\idx{i,j} = (\idx{i+j})\dx + \iu b_\idx{j},
\end{align}
where \(b_\idx{j}\) is a real-valued monotone sequence given in terms of nonnegative real numbers \(s_\idx{j}\) as
\begin{align*}
	b_\idx{j} = \sum_{\tilde{\idx{j}}=0}^{\idx{j}-1}\frac{s_{\tilde{\idx{j}}}\dx}{\omega}.
\end{align*}
The parameters \(s_\idx{j}\) controlling the ``vertical gaps'' in the quadrilateral graph will later become the PML damping coefficients.  See \figref{fig:ParallelogramGraph}.

\begin{figure}
	\centering
	\includegraphics[width=0.8\textwidth]{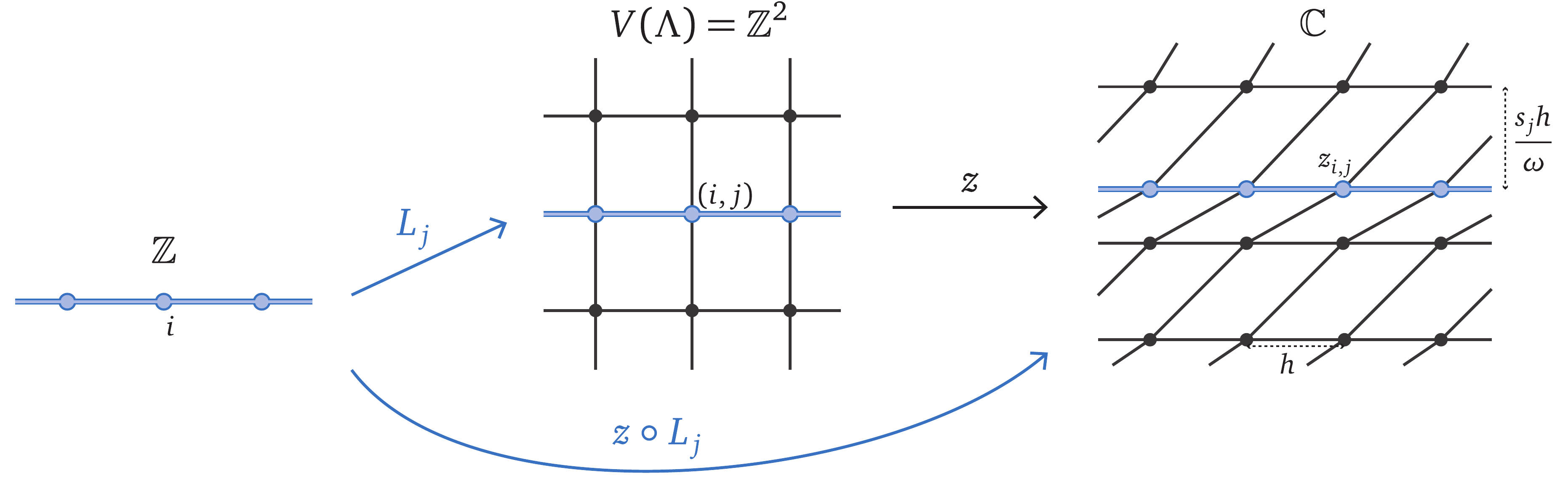}
	\caption{\label{fig:ParallelogramGraph}The discrete complex domain \((\Lambda,z)\) described by \eqref{eq:VertexPositions} is a parallelogram graph in \(\CC\). The embedded horizontal lines \(L_\idx{j}\colon\idx{i}\mapsto (\idx{i},\idx{j})\) are mapped to lines parallel to the real axis by \(z\).}
\end{figure}

Now, observe that the quadrilateral graph \((\Lambda,z)\) has each of its horizontal edges being the real constant \(\dx = z_\idx{i+1,j}-z_\idx{i,j}\).  In particular, \((\Lambda,z)\) is foliated into embedded real lines \(z\circ L_\idx{j}\) denoted by the embeddings \(L_\idx{j}\colon\ZZ\hookrightarrow V(\Lambda)\), \(\idx{i}\mapsto(\idx{i},\idx{j})\).  
With a distinguished horizontal direction, the shift operator \(\tau\) is naturally defined for complex-valued functions \(f\colon V(\Lambda)\rightarrow\CC\):
\begin{align*}
	(\tau f)_\idx{i,j} = f_\idx{i+1,j},
\end{align*}
which is compatible with the shift operators defined on \(\CC^\ZZ\) when restricted to the horizontal lines:
\[
	(\tau^{\pm 1}f)\circ L_\idx{j} = \tau^{\pm 1}(f\circ L_\idx{j}).
\]
By the same token, the difference operator \((-2+\tau^{-1}+\tau)/\dx^2\) is extended to \(\CC^{V(\Lambda)}\) and commutes with the pullback by \(L_\idx{j}\).
The latter \emph{naturality} property allows us to identify the eigenfunctions of \((-2+\tau^{-1}+\tau)/\dx^2\) on \(\CC^{V(\Lambda)}\) easily.  Suppose \(\mathcal{Y}_\lambda\subset\CC^\ZZ\) denotes the 
 eigenspace of \((-2+\tau^{-1}+\tau)/\dx^2\vert_{\CC^\ZZ}\) corresponding to an eigenvalue \(\lambda\).  Then an eigenfunction \(f\) of \((-2+\tau^{-1}+\tau)/\dx^2\vert_{\CC^{V(\Lambda)}}\) for the eigenvalue \(\lambda\) is in the most general form given by \(f_\idx{i,j} = Y_\idx{j}(\idx{i})\) with arbitrary choices of \((Y_\idx{j}\in\mathcal{Y}_\lambda)_{\idx{j}\in\ZZ}\) for each horizontal line.
 %
Fortunately, this bewilderingly large eigenspace is reduced to a much smaller space by imposing the discrete holomorphicity (\cf~\defref{def:ComplexDifferentiable}).

\begin{theorem}
	\label{thm:DecayRate}
	For each \(k\in \revise{\cK}\), \(\omega\in\RR\), \(\omega\neq 0\),
	there is a unique discrete holomorphic (with respect to \(z\)) eigenfunction \(f_k\colon V(\Lambda)\rightarrow\CC\) of \((-2+\tau^{-1}+\tau)/\dx^2\) such that \(f_k\circ L_0 = W_k\) (as defined in \textup{Eq.}~\eqref{eq:DiscreteMode}). Explicitly, for \(\idx{j}> 0\)
	\begin{align}
		\label{eq:DecayRate}
		(f_k)_\idx{i,j} = \left(\prod_{\tilde{\idx{j}}=0}^{\idx{j}-1}\rho({s_{\tilde{\idx{j}}}},k,\omega)\right)W_k(\idx{i}+\idx{j}),\quad \rho(s,k,\omega) \coloneqq \frac{2+\iu\frac{s}{\omega}(1-e^{-\iu k\dx})}{2+\iu\frac{s}{\omega}(1-e^{\iu k\dx})},
	\end{align}
	and for \(\idx{j}<0\)
	\begin{align}
		(f_k)_\idx{i,j} = \left(\prod_{\tilde{\idx{j}}=\idx{j}}^{-1}\rho({s_{\tilde{\idx{j}}}},k,\omega)^{-1}\right)W_k(\idx{i}+\idx{j}).
	\end{align}
\end{theorem}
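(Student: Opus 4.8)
The plan is to combine the eigenfunction structure with discrete holomorphicity so as to reduce the claim to two decoupled scalar recurrences. Recall from the paragraph preceding the theorem that every eigenfunction $f$ of $(-2+\tau^{-1}+\tau)/\dx^2$ on $\CC^{V(\Lambda)}$ for the eigenvalue $\lambda=-\tfrac{4}{\dx^2}\sin^2(\tfrac{k\dx}{2})$ has the form $f_{i,j}=Y_j(i)$ with each row $Y_j$ in the eigenspace $\mathcal{Y}_\lambda\subset\CC^\ZZ$. Away from the two exceptional wave numbers $k\in\{0,\tfrac{\pi}{\dx}\}$ (treated separately at the end), $\mathcal{Y}_\lambda$ is spanned by the linearly independent sequences $W_k$ and $W_{-k}$, so I would write
\begin{align*}
f_{i,j}=a_j\,W_k(i)+c_j\,W_{-k}(i),
\end{align*}
taking the scalar sequences $(a_j)$, $(c_j)$ as the unknowns. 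The normalization $f_k\circ L_0=W_k$ fixes $a_0=1$ and $c_0=0$.

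Next I would impose the discrete Cauchy--Riemann equation \eqref{eq:DiscreteCauchyRiemann} of \defref{def:ComplexDifferentiable}. From the vertex positions \eqref{eq:VertexPositions} and $b_{j+1}-b_j=\tfrac{s_j\dx}{\omega}$, the diagonal difference is $z_{i+1,j+1}-z_{i,j}=\dx\bigl(2+\iu\tfrac{s_j}{\omega}\bigr)$ and the anti-diagonal difference is $z_{i,j+1}-z_{i+1,j}=\iu\tfrac{s_j\dx}{\omega}$. Substituting the ansatz and using $(\tau W_{\pm k})(i)=e^{\pm\iu k\dx}W_{\pm k}(i)$, the Cauchy--Riemann relation reduces to an identity $A_j\,W_k(i)+C_j\,W_{-k}(i)=0$ valid for all $i$, in which $A_j$ involves only $(a_j,a_{j+1})$ and $C_j$ only $(c_j,c_{j+1})$. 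This is where the separation of variables pays off: linear independence of $W_k$ and $W_{-k}$ as functions of $i$ forces $A_j=C_j=0$, so the single two-dimensional holomorphicity condition splits into two independent first-order recurrences, one in $(a_j)$ and one in $(c_j)$.

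Solving $A_j=0$ is the step where the closed form emerges. Clearing denominators and collecting the coefficients of $a_{j+1}$ and $a_j$ gives
\begin{align*}
\frac{a_{j+1}}{a_j}=\frac{2e^{\iu k\dx}-\iu\tfrac{s_j}{\omega}\bigl(1-e^{\iu k\dx}\bigr)}{2+\iu\tfrac{s_j}{\omega}\bigl(1-e^{\iu k\dx}\bigr)}=\rho(s_j,k,\omega)\,e^{\iu k\dx},
\end{align*}
with $\rho$ exactly as in \eqref{eq:DecayRate}; the last equality follows from $e^{\iu k\dx}\bigl(1-e^{-\iu k\dx}\bigr)=-(1-e^{\iu k\dx})$ applied to the numerator of $\rho$. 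The recurrence $C_j=0$ is the same identity with $k$ replaced by $-k$, so its nonzero coefficient together with $c_0=0$ forces $c_j=0$ for every $j$, running the recurrence both upward and downward from $j=0$. With $a_0=1$, telescoping yields $a_j=\bigl(\prod_{\tilde{j}=0}^{j-1}\rho(s_{\tilde{j}},k,\omega)\bigr)e^{\iu k\dx j}$ for $j>0$ and the reciprocal product for $j<0$; since $W_k(i)=e^{-\iu k\dx j}W_k(i+j)$, this gives $a_jW_k(i)=\bigl(\prod\rho\bigr)W_k(i+j)$, which is precisely \eqref{eq:DecayRate}. Uniqueness is immediate, since first-order recurrences determine all rows from row $0$.

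The genuine difficulties I expect are technical rather than structural. The first is carrying out the reduction of $A_j=0$ without a sign error, so that the denominator $2+\iu\tfrac{s_j}{\omega}(1-e^{\iu k\dx})$ of $\rho$ appears correctly. The second is checking that this denominator, hence each recurrence coefficient, does not vanish on the relevant parameter range, so that the recurrences are invertible in both directions and the extension to $j<0$ is legitimate; this is a short analysis using $\omega\in\RR\setminus\{0\}$, $s_j\geq 0$, and $\im(k)\leq 0$. Finally, the degenerate wave numbers $k\in\{0,\tfrac{\pi}{\dx}\}$, at which $W_k=W_{-k}$ and $\mathcal{Y}_\lambda$ instead contains a solution linear in $i$, require a separate ansatz; there $\rho\equiv 1$, and one checks directly that the constant (resp.\ alternating) mode is the unique discrete holomorphic extension, in agreement with \eqref{eq:DecayRate}.
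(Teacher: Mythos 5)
Your proposal is correct and takes essentially the same route as the paper's proof in \ref{app:ProofOfThmDecayRate}: each row \(f\circ L_\idx{j}\) lies in the eigenspace \(\mathcal{Y}_\lambda\) of \eqref{eq:FullSpectrum}, the discrete Cauchy--Riemann equation between consecutive rows becomes the one-step recursion \eqref{eq:Recursion}, linear independence together with the nonvanishing of \(2+\iu\tfrac{s_\idx{j}}{\omega}(1-e^{-\iu k\dx})\) kills the \(W_{-k}\) component, and \(k=0,\nicefrac{\pi}{\dx}\) are handled separately via the linear modes \(A\) and \(AW_{\pi/\dx}\). Your only real deviation is presentational --- you run the recurrence in the coefficients \((a_\idx{j},c_\idx{j})\) for all \(\idx{j}\) at once rather than proving the \(\idx{j}=0\to 1\) step and iterating as the paper does --- and the nonvanishing check you defer is exactly what the paper verifies for \(2+\iu\tfrac{s_\idx{j}}{\omega}(1-e^{-\iu k\dx})\) (and, for the denominator of \(\rho\), divides through with the same implicit assumption you make).
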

\begin{proof}
	See \ref{app:ProofOfThmDecayRate}.
\end{proof}

The factor \(\rho(s_\idx{j},k,\omega)\) is a discrete analog of the the exponential factor of the analytic continuation of a plane wave (\cf~Eq.~\eqref{eq:ContinuousDecayRate}).  
\revise{The following theorem guarantees that}
 \(|\rho(s,k,\omega)|<1\) whenever \(k,\omega\) represent a right-traveling wave and \(s>0\).

\begin{theorem}
	\label{thm:DecayRateLessThanOne}
	For every \(s>0\), \(\omega\in\RR\), \(\omega\neq 0\), and \revise{\(k\in\cK\) with \(0<\sgn(\omega)\re(k)<\nicefrac{\pi}{\dx}\)}, we have \(|\rho(s,k,\omega)|<1\) \revise{and \(\nicefrac{|\rho(s,k,\omega)|}{|\rho(s,-\conj{k},\omega)|}<1\)}.
\end{theorem}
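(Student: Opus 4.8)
The plan is to reduce both inequalities to manifestly sign-definite algebraic expressions by computing $|\rho|^2$ in coordinates adapted to the data. Writing \(k=\re(k)+\iu\im(k)\) with \(\im(k)\le 0\), I set \(\zeta:=e^{\iu k\dx}=r\,e^{\iu\theta}\), where \(r:=e^{-\im(k)\dx}\ge 1\) and \(\theta:=\re(k)\dx\in(-\pi,\pi]\); note \(e^{-\iu k\dx}=\zeta^{-1}\), and abbreviate \(a:=s/\omega\). The hypothesis \(0<\sgn(\omega)\re(k)<\nicefrac{\pi}{\dx}\) translates into \(\sgn(\omega)\theta\in(0,\pi)\), hence \(\omega\sin\theta>0\), and therefore \(a\sin\theta=\nicefrac{s\sin\theta}{\omega}>0\) because \(s>0\). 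This single sign condition is what will drive both estimates; notice it holds regardless of the sign of \(\omega\).

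For \(|\rho|<1\) I would split the numerator \(N=2+\iu a(1-\zeta^{-1})\) and the denominator \(D=2+\iu a(1-\zeta)\) of \(\rho\) into real and imaginary parts using \(\zeta=r\,e^{\iu\theta}\), obtaining \(|N|^2=(2-ar^{-1}\sin\theta)^2+a^2(1-r^{-1}\cos\theta)^2\) and \(|D|^2=(2+ar\sin\theta)^2+a^2(1-r\cos\theta)^2\). Forming \(|N|^2-|D|^2\), collecting terms and applying \(\sin^2\theta+\cos^2\theta=1\), the expression should collapse to
\[
|N|^2-|D|^2 = -4\,a\,u\,\sin\theta \;-\; a^2\,v\,(u-2\cos\theta),
\]
with \(u:=r+r^{-1}\ge 2\) and \(v:=r-r^{-1}\ge 0\). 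Here the second summand is \(\le 0\) (since \(v\ge 0\) and \(u-2\cos\theta\ge 2-2=0\)), while the first is strictly negative because \(u>0\) and \(a\sin\theta>0\). Hence \(|N|<|D|\), i.e.\ \(|\rho(s,k,\omega)|<1\).

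For the ratio bound I would first observe that replacing \(k\) by \(-\conj k\) leaves \(\im(k)\)—hence \(r\)—fixed but sends \(\theta\mapsto-\theta\) (equivalently \(\zeta\mapsto\conj\zeta\)). Writing both \(|\rho(s,k,\omega)|^2\) and \(|\rho(s,-\conj k,\omega)|^2\) in the same coordinates and setting \(A:=ar^{-1}\sin\theta\), \(B:=ar\sin\theta\)—both strictly positive since \(a\sin\theta>0\)—the claim \(|\rho(s,k,\omega)|<|\rho(s,-\conj k,\omega)|\) becomes the polynomial inequality \([(2-A)^2+P]\,[(2-B)^2+Q]<[(2+A)^2+P]\,[(2+B)^2+Q]\), where \(P:=a^2(1-r^{-1}\cos\theta)^2\ge 0\) and \(Q:=a^2(1-r\cos\theta)^2\ge 0\). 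Expanding the difference of the two sides and using \((2+x)^2-(2-x)^2=8x\) together with a difference-of-squares, it should factor as \(8\bigl[(A+B)(4+AB)+AQ+BP\bigr]\), which is strictly positive because \(A,B>0\) and \(P,Q\ge 0\).

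I expect the main obstacle to be bookkeeping rather than concept: all of the content lies in recognizing that, after the coordinate change, the first inequality reduces to a sum of two sign-definite pieces and the ratio inequality to a sum of manifestly nonnegative pieces. The delicate points are (i) confirming that the auxiliary \(a^2\)-term \(-a^2v(u-2\cos\theta)\) in the first computation is nonpositive for \emph{all} \(r\ge 1\) and \(\theta\), so it never competes with the strictly negative leading term, and (ii) verifying that the factorization in the ratio step contains no negative contribution, which hinges on \(A,B>0\)—again a direct consequence of \(\sgn(\omega)\re(k)\in(0,\nicefrac{\pi}{\dx})\). The boundary case \(\im(k)=0\) (plane waves, \(r=1\), \(v=0\)) is covered automatically: the \(a^2\)-term vanishes but the strict inequality survives through the leading term.
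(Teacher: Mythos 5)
Your proof is correct, and the two computational claims you deferred do check out: with \(u=r+r^{-1}\), \(v=r-r^{-1}\) the difference collapses exactly to \(|N|^2-|D|^2=-4au\sin\theta-a^2v\,(u-2\cos\theta)\), and the cross-multiplied difference in the ratio step expands to \(8\bigl[(A+B)(4+AB)+AQ+BP\bigr]\); the sign conditions follow from \(a\sin\theta>0\) exactly as you argue, including the plane-wave boundary case \(r=1\), \(v=0\). Your route is genuinely different from the paper's, however. The paper first invokes the symmetry \(\rho(s,k,-\omega)=\conj{\rho(s,-\conj{k},\omega)}\) to reduce to \(\omega>0\), then normalizes \(\rho=\bigl(1-\iu a-\nicefrac{1}{\zeta}\bigr)\big/\bigl(1-\iu a-\zeta\bigr)\) with its \(a=\nicefrac{2\omega}{s}>0\) (not your \(a=\nicefrac{s}{\omega}\)). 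For \(|\rho|<1\) it compares \(|\zeta|^2-2\re(\zeta)\) against \(\nicefrac{1}{|\zeta|^2}-2\re\bigl(\nicefrac{1}{\zeta}\bigr)\) via \(|\zeta|^2|1-\zeta|^2>|1-\zeta|^2\) and finishes with \(\im(\zeta)>0>\im\bigl(\nicefrac{1}{\zeta}\bigr)\); for the ratio it gives a two-line geometric argument: \(1-\iu a\) lies in the open lower half-plane, hence is strictly farther from \(\zeta\) than from \(\conj\zeta\), and strictly farther from \(\nicefrac{1}{\conj\zeta}\) than from \(\nicefrac{1}{\zeta}\). You instead avoid the symmetry reduction altogether by carrying the single sign fact \(a\sin\theta>0\) uniformly through both signs of \(\omega\), and you replace the reflection argument with explicit polar-coordinate expansions and sign-definite factorizations. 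The trade: the paper's proof is shorter and conceptually transparent (a reflection across the real axis), while yours is heavier bookkeeping but yields quantitative byproducts — an explicit damping margin \(-4au\sin\theta\) showing exactly where strictness comes from, and a cross-multiplied polynomial form of the ratio inequality that remains meaningful even at the degenerate parameter \(\conj\zeta=1-\iu\,\nicefrac{2\omega}{s}\), where \((2-B)^2+Q=0\), the denominator of \(\rho(s,-\conj{k},\omega)\) vanishes, and the literal quotient in the theorem statement requires an extended-value reading in both your formulation and the paper's.
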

\begin{proof}
    \revise{See \ref{app:ProofOfThmDecayRateLessThanOne}.}
\end{proof}

Another important case of \thmref{thm:DecayRate} is that \(\rho(0,k,\omega) = 1\).  In particular, \((f_k)_\idx{i,j} = W_k(\idx{i}+\idx{j})\) whenever \(f\circ L_0 = W_k\) and \(s_{\tilde{\idx{j}}} = 0\) for all \(\tilde{\idx{j}}\) between \(\idx{j}\) and \(0\).  In fact, a vanishing vertical gap \(s_{\idx{j}}=0\) leads to the following general statement.
\begin{lemma}
	\label{lem:ZeroGap}
	Suppose \(f\) is a discrete holomorphic function on a discrete complex domain \((\Lambda,z)\) described by \eqref{eq:VertexPositions} with \(s_{\idx{j}} = 0\) for some \(\idx{j}\in\ZZ\). Then
	\begin{align}
		\tau(f\circ L_\idx{j}) = (\tau f)\circ L_\idx{j} = f\circ L_\idx{j+1}.
	\end{align}
\end{lemma}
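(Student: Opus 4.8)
The plan is to reduce the lemma to a single degeneracy: when the vertical gap $s_\idx{j}$ vanishes, the horizontal lines $L_\idx{j}$ and $L_\idx{j+1}$ are mapped by $z$ to the \emph{same} embedded real line, and discrete holomorphicity then forces $f$ to agree across the two lines up to the shift $\tau$. First I would dispose of the first equality $\tau(f\circ L_\idx{j}) = (\tau f)\circ L_\idx{j}$: both sides evaluated at $\idx{i}$ equal $f_\idx{i+1,j}$ by the definitions of $\tau$ on $\CC^{V(\Lambda)}$ and of the pullback $L_\idx{j}$, so this identity holds for every $f$ and every $\idx{j}$ with no hypothesis on the geometry. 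It is exactly the naturality $(\tau^{\pm 1}f)\circ L_\idx{j} = \tau^{\pm 1}(f\circ L_\idx{j})$ already recorded in the text. The genuine content is therefore the second equality $(\tau f)\circ L_\idx{j} = f\circ L_\idx{j+1}$, i.e. $f_\idx{i+1,j} = f_\idx{i,j+1}$ for all $\idx{i}$.

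Next I would read off the effect of $s_\idx{j}=0$ on the vertex positions from \eqref{eq:VertexPositions}. Since $b_\idx{j+1} - b_\idx{j} = s_\idx{j}\dx/\omega = 0$, we get $b_\idx{j+1}=b_\idx{j}$, whence a direct substitution gives $z_\idx{i+1,j} = z_\idx{i,j+1} = (\idx{i+j}+1)\dx + \iu b_\idx{j}$ for every $\idx{i}$, while $z_\idx{i+1,j+1} - z_\idx{i,j} = 2\dx \neq 0$. Consequently, on each face $Q_\idx{i+\half,j+\half}$ the anti-diagonal edge vector $(z_\idx{i,j+1} - z_\idx{i+1,j})/2$ of the medial parallelogram $P(Q_\idx{i+\half,j+\half};z)$ vanishes, and that parallelogram collapses onto the segment spanned by its nonzero diagonal edge vector $(z_\idx{i+1,j+1} - z_\idx{i,j})/2 = \dx$.

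Then I would invoke \propref{prop:GeometricHolomorphicity}. Discrete holomorphicity of $f$ at $Q_\idx{i+\half,j+\half}$ says that $P(Q_\idx{i+\half,j+\half};f)$ is the scaled rotation of $P(Q_\idx{i+\half,j+\half};z)$ by the factor $D_z f(Q_\idx{i+\half,j+\half})$; in edge-vector form this is the pair of identities $(f_\idx{i+1,j+1} - f_\idx{i,j})/2 = D_z f(Q_\idx{i+\half,j+\half})\,(z_\idx{i+1,j+1} - z_\idx{i,j})/2$ and $(f_\idx{i,j+1} - f_\idx{i+1,j})/2 = D_z f(Q_\idx{i+\half,j+\half})\,(z_\idx{i,j+1} - z_\idx{i+1,j})/2$. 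Since the right-hand factor of the second identity is $0$, its left side vanishes, giving $f_\idx{i+1,j} = f_\idx{i,j+1}$. Letting $\idx{i}$ range over $\ZZ$ yields $(\tau f)\circ L_\idx{j} = f\circ L_\idx{j+1}$, completing the proof.

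The one delicate point, and the reason I would route through \propref{prop:GeometricHolomorphicity} rather than the quotient form \eqref{eq:DiscreteCauchyRiemann} directly, is that the anti-diagonal denominator $z_\idx{i,j+1} - z_\idx{i+1,j}$ is precisely zero here, so the ratio statement of the discrete Cauchy--Riemann equation is ill-posed as literally written. The multiplicative scaled-rotation characterization sidesteps this entirely: it is an identity of products that stays meaningful when one medial edge degenerates and simply forces the matching $f$-edge to collapse. I would remark in passing that the discrete complex derivative $D_z f(Q_\idx{i+\half,j+\half}) = (f_\idx{i+1,j+1} - f_\idx{i,j})/(2\dx)$ remains well defined because the diagonal denominator $2\dx$ is nonzero, so holomorphicity at the face is unambiguous even though $P(Q_\idx{i+\half,j+\half};z)$ is degenerate.
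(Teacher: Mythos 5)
Your proof is correct and takes essentially the same approach as the paper: the paper's one-line proof likewise applies the discrete Cauchy--Riemann relation \eqref{eq:DiscreteCauchyRiemann} on the faces between \(L_\idx{j}\) and \(L_\idx{j+1}\), observing that \(z_\idx{i,j+1}-z_\idx{i+1,j} = \iu s_\idx{j}\dx/\omega = 0\) forces \(f_\idx{i,j+1}-f_\idx{i+1,j}=0\), which is exactly your key step. Your detour through \propref{prop:GeometricHolomorphicity} to sidestep the vanishing denominator is a minor presentational refinement of the same computation --- the paper implicitly reads \eqref{eq:DiscreteCauchyRiemann} in cross-multiplied form, which is legitimate since \(z_\idx{i+1,j+1}-z_\idx{i,j}=2\dx\neq 0\) --- and your observation that the first equality is pure naturality likewise matches the text preceding the lemma.
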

\begin{proof}
	Eq.~\eqref{eq:DiscreteCauchyRiemann} with \(z_{\idx{i,j+1}} - z_{\idx{i+1,j}} = {\iu s_\idx{j}\dx}/{\omega} = 0\) yields \(f_\idx{i,j+1} - f_\idx{i+1,j}=0\) for all \(\idx{i}\in\ZZ\).
\end{proof}
\subsection{The Stretching Path}
\label{sec:StretchingPath}

\begin{figure}
	\centering
	\includegraphics[width=0.82\textwidth]{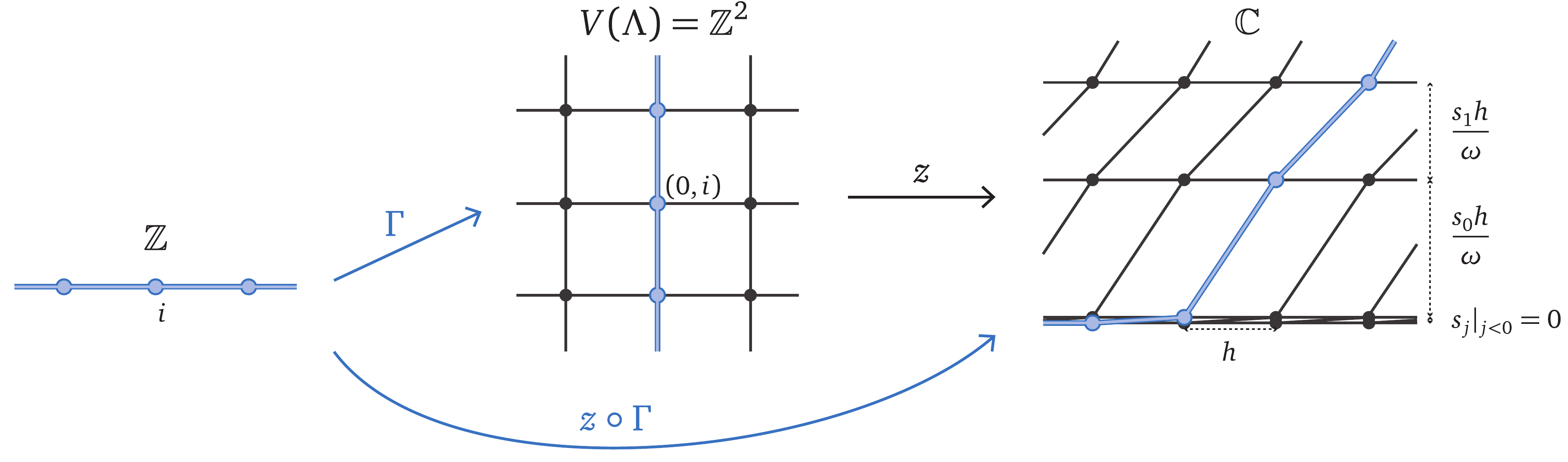}
	\caption{\label{fig:StretchingPath}The stretching path \(z\circ\Gamma\colon\ZZ\rightarrow\CC\) shares similarity in appearance with the path \(\gamma_\di\colon\RR\rightarrow\CC\) in \figref{fig:SmoothStretching}. The path \(z\circ\Gamma\) is mapped from a vertical path \(\Gamma\colon\ZZ\hookrightarrow\ZZ^2\) by the vertex positions \(z\) of a parallelogram graph. Vanishing vertical gap parameter \(s\) makes segments of \(z\circ\Gamma\) parallel to the real axis.}
\end{figure}

Continuing the setup of \secref{sec:DiscreteAnalyticContinuationOfPlaneWaves}, we consider a path \(\Gamma\colon\ZZ\hookrightarrow V(\Lambda)\), different from horizontal lines \(L_\idx{j}\), so that plane waves are attenuated along \(\Gamma\). We simply let \(\Gamma\) be the path that traverses vertically in \(\Lambda\):
\begin{align}
	\label{eq:StretchingPath}
	\Gamma(\idx{i})\coloneqq
		(0,\idx{i}).
\end{align}
In addition, we let
\begin{align*}
	s_{\idx{j}} \equiv 0,\quad\idx{j}<0
\end{align*}
for Eq.~\eqref{eq:VertexPositions}.
 Then the realization \(z\circ\Gamma\) of the path in \(\CC\) replicates a curve similar to \eqref{eq:ContinuousCurve} as shown in \figref{fig:StretchingPath}. In particular \(\re(z\circ\Gamma(\idx{i})) = \dx\idx{i}\) for all \(\idx{i}\in\ZZ\), and
\begin{align*}
	\im(z\circ\Gamma(\idx{i})) = 
	\begin{cases}
		0,&\idx{i}\leq 0\\
		\sum_{\idx{j}=0}^{\idx{i}-1}\frac{s_\idx{j}\dx}{\omega},&\idx{i}>0.
	\end{cases}
\end{align*}
Now, by \thmref{thm:DecayRate} and \thmref{thm:DecayRateLessThanOne} each holomorphic normal mode \(f_k\) of \((-2+\tau^{-1}+\tau)/\dx^2\) that corresponds to a right (\resp~left) traveling wave \(W_k = f_k\circ L_0\) will decay as we move up (\resp~down) in \(\Lambda\). Moreover, since \(s_\idx{j}\vert_{\idx{j}<0}\equiv 0\), by \lemref{lem:ZeroGap} we have \(f_k\circ L_0 = f_k\circ\Gamma\) for \(\idx{i}\leq 0\). Hence, whenever \((f_k\circ\Gamma)\vert_{\idx{i}\leq 0}\) is a traveling wave, we must have \((f_k\circ\Gamma)\vert_{\idx{i}>0}\) be a wave attenuated along its traveling direction.  Note that \(f_k\circ\Gamma\) has the exact properties we wish for a solution of a set of PML equations. 

\subsection{Discrete Complex Coordinate Stretching}
\label{sec:DiscreteComplexCoordinateStretching}

With the tools with have so far, we are ready to complexify the discrete wave equation \eqref{eq:DiscreteWaveEquationFourier} into \eqref{eq:ComplexifiedDiscreteWave}.  In saying that, we return to the \(d\)-dimensional setup. First, for each dimensional component \(\di\) of the original domain \(\ZZ^d\) we consider a discrete complex domain \((\Lambda,z^{(\di)})\) as described in \secref{sec:DiscreteAnalyticContinuationOfPlaneWaves}. More precisely, \(V(\Lambda) = \ZZ^2\) and 
\begin{align*}
	z^{(\di)}_{\idx{i,j}} = (\idx{i}+\idx{j})\dx + \iu\sum_{\tilde{\idx{j}}=0}^{\idx{j}-1}\frac{\sigma_\di(\tilde{\idx{j}})\dx}{\omega},
\end{align*}
where \(\sigma_\di(\idx{j})\geq 0, \sigma_{\di}(\idx{j})\vert_{\idx{j}<0} = 0\), is our PML damping coefficients.
Then, we extend the domain of \eqref{eq:DiscreteWaveEquationFourier} and formulate
\begin{align}
	\label{eq:ComplexifiedDiscreteWaveEquationFourier}
	-\omega^2 f = \sum_{\di=1}^{d}\frac{1}{\dx^2}\left(-2+\tau_\di^{-1}+\tau_\di\right)f,\quad f\colon (V(\Lambda))^d\rightarrow\CC,
\end{align}
where \((\tau_\di^{\pm 1}f)\left((\idx{i}_1,\idx{j}_1),\ldots,(\idx{i}_\di,\idx{i}_\di),\ldots(\idx{i}_d,\idx{j}_d)\right) = f\left((\idx{i}_1,\idx{j}_1),\ldots,(\idx{i}_\di\pm 1,\idx{i}_\di),\ldots(\idx{i}_d,\idx{j}_d)\right)\) are defined for \(V(\Lambda)\)-supported functions as described in \secref{sec:DiscreteAnalyticContinuationOfPlaneWaves}.  In addition to \eqref{eq:ComplexifiedDiscreteWaveEquationFourier}, we impose componentwise homolomorphicity: for each fixed \((\idx{i}_1,\idx{j}_1),\ldots, (\idx{i}_{\di-1},\idx{j}_{\di-1}),(\idx{i}_{\di+1},\idx{j}_{\di+1}),\ldots,(\idx{i}_d,\idx{j}_d)\),
\begin{align}
	(\idx{i}_\di,\idx{j}_\di)\mapsto f\left((\idx{i}_1,\idx{j}_1),\ldots,(\idx{i}_\di,\idx{i}_\di),\ldots(\idx{i}_d,\idx{j}_d)\right)\text{ is discrete holomorphic with respect to \(z^{(\di)}\)}.
\end{align}
Now, using the stretching path \eqref{eq:StretchingPath}, we redefine the primary variable \(\hat{U}(\omega,\bi)\), \(\bi\in\ZZ^d\), as the restriction of \(f\) along \(\Gamma\):
\begin{align}
	\label{eq:FRestrictedOnGamma}
	\hat{U}(\omega,\idx{i}_1,\ldots,\idx{i}_d)\coloneqq f(\Gamma(\idx{i}_1),\ldots,\Gamma(\idx{i}_d)).
\end{align}

\begin{remark}
	According to the theory developed in \textup{\secref{sec:DiscreteAnalyticContinuationOfPlaneWaves}}~and~\textup{\ref{sec:StretchingPath}}, \emph{by construction}
	such a function \(\hat{U}\) (\textup{Eq.~\eqref{eq:FRestrictedOnGamma}}) arising from a solution \(f\) of Eq.~\eqref{eq:ComplexifiedDiscreteWaveEquationFourier} is dictated to be an attenuated wave on \(\bigcup_{\di=1}^d\{\idx{i}_\di\geq 0\}\) whenever \(\hat{U}\) is a \revise{traveling} wave \(\prod_{\di=1}^d W_{k_\di}(\idx{i}_\di)\) on \(\bigcap_{\di=1}^d\{\idx{i}_\di<0\}\).
\end{remark}

\begin{figure}
	\centering
	\includegraphics[width=0.65\textwidth]{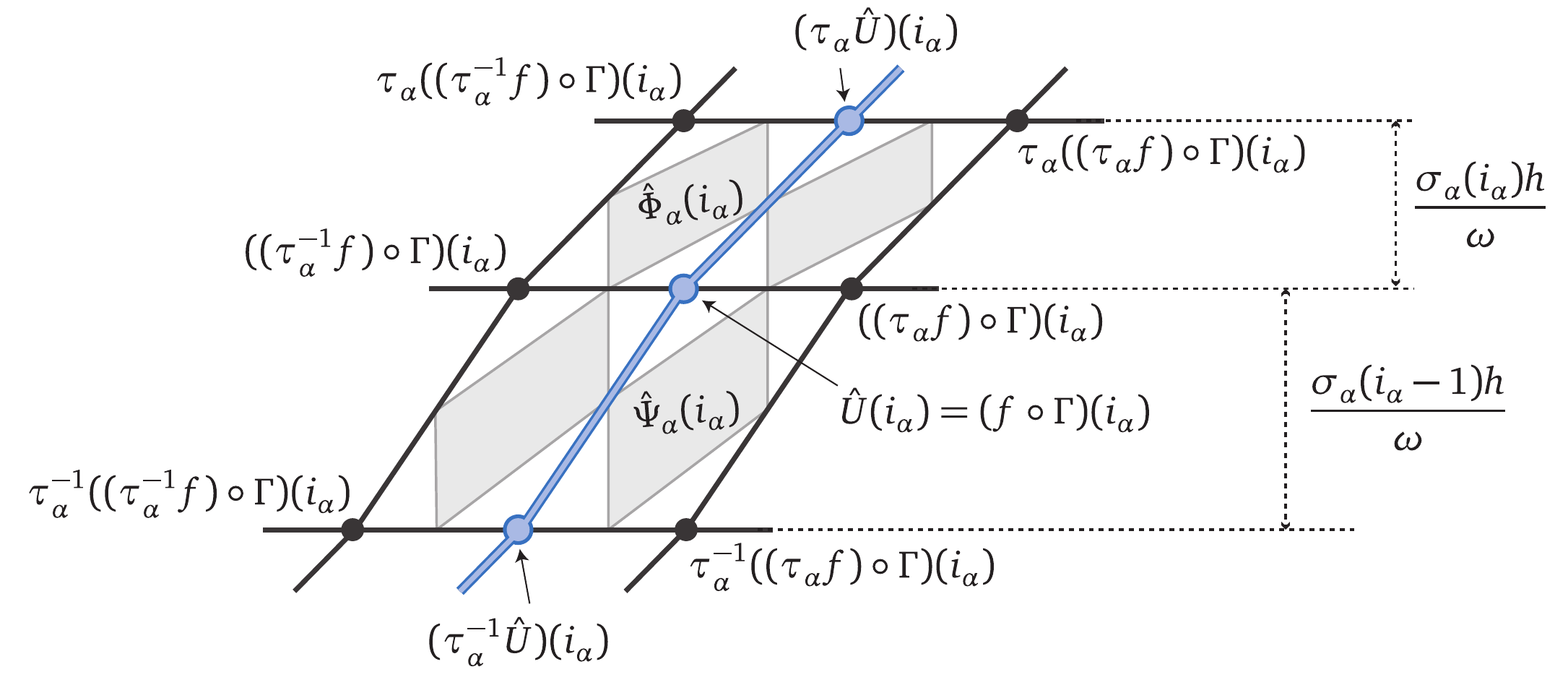}
	\caption{\label{fig:Stencils}An illustration showing at which vertex or quadrilateral of \(\Lambda\) each quantity lives.  The figure only shows a neighborhood of the image of \(\Gamma_\di\) in an \(\di\)-component slice of \((V(\Lambda))^d\). The combinatorial \(\Lambda\) is visualized using the vertex positions \(z_\di\).  Since only the \(\di\)-component is concerned, we use, \eg, \(\tau_\di((\tau_\di^{-1}\circ f)\circ\Gamma)(\idx{i}_\di)\) as an abbreviation for \((\tau_\di^{-1}\circ f)(\Gamma(\idx{i}_1),\ldots,\Gamma(\idx{i}_\di+1),\ldots,\Gamma(\idx{i}_d))\).} 
\end{figure}

Finally,
to arrive at the PML equations we need to rewrite Eq.~\eqref{eq:ComplexifiedDiscreteWaveEquationFourier} as an equation for \(\hat{U}\), and arrange it in such a form that we can transform back to the time domain.
Similar to the continuous counterpart Eq.~\eqref{eq:VHatDefinition}, we consider the \(\nicefrac{1}{(\iu\omega)}\)-scaled discrete complex derivative (in the sense of \defref{def:ComplexDifferentiable})
\begin{align}
	\label{eq:DiscretePMLVelocity} \frac{1}{\iu\omega}\left(D_{z^{(\di)}}f\right)\big((\idx{i}_1,\idx{j}_1),\ldots,Q_\idx{i_\di+\half,j_\di+\half},\ldots,(\idx{i}_d,\idx{j}_d)\big)
\end{align}
at each quadrilateral \(Q_\idx{i_\di+\half,j_\di+\half}\) incident to the image of \(\Gamma\) in each dimension.  Explicitly, using \defref{def:ComplexDifferentiable} we let the quantity \eqref{eq:DiscretePMLVelocity} at the quadrilaterals to the \emph{upper-left} of \(\Gamma\) (\figref{fig:Stencils}) be denoted by
\begin{subequations}
	\label{eq:PhiDefinition}
\begin{align}
	\label{eq:PhiDefinitionA}
    \nonumber
	&\hat\Phi_\di(\omega,\idx{i}_1,\ldots,\idx{i}_d)\\
	&\coloneqq
	\frac{1}{\iu\omega}\cdot\frac{1}{2\dx + \iu\frac{\sigma_\di(\idx{i}_\di)}{\omega}\dx}
	\bigg(
	f(\Gamma(\idx{i}_1),\ldots,\Gamma(\idx{i}_\di+1),\ldots,\Gamma(\idx{i}_d))
	-
	(\tau_\di^{-1} f)(\Gamma(\idx{i}_1),\ldots,\Gamma(\idx{i}_\di),\ldots,\Gamma(\idx{i}_d))
	\bigg)\\
	\label{eq:PhiDefinitionB}
	&\overset{\eqref{eq:DiscreteCauchyRiemann}}{=}
	\frac{1}{\iu\omega}\cdot
	\frac{1}{\iu\frac{\sigma_\di(\idx{i}_\di)}{\omega}\dx}
	\bigg(
	(\tau_\di^{-1}f)(\Gamma(\idx{i}_1),\ldots,\Gamma(\idx{i}_\di+1),\ldots,\Gamma(\idx{i}_d))
	-
	f(\Gamma(\idx{i}_1),\ldots,\Gamma(\idx{i}_\di),\ldots,\Gamma(\idx{i}_d))
	\bigg);
\end{align}
\end{subequations}
and we let \eqref{eq:DiscretePMLVelocity} at the quadrilaterals to the \emph{lower-right} of \(\Gamma\) be denoted by
\begin{subequations}
	\label{eq:PsiDefinition}
\begin{align}
	\label{eq:PsiDefinitionA}
    \nonumber
	&\hat\Psi_\di(\omega,\idx{i}_1,\ldots,\idx{i}_d)\\
	&\coloneqq
	\frac{1}{\iu\omega}\cdot\frac{1}{2\dx + \iu\frac{\sigma_\di(\idx{i}_\di-1)}{\omega}\dx}
	\bigg(
	(\tau_\di f)(\Gamma(\idx{i}_1),\ldots,\Gamma(\idx{i}_\di),\ldots,\Gamma(\idx{i}_d))
	-
	f(\Gamma(\idx{i}_1),\ldots,\Gamma(\idx{i}_\di-1),\ldots,\Gamma(\idx{i}_d))
	\bigg)\\
	\label{eq:PsiDefinitionB}
	&\overset{\eqref{eq:DiscreteCauchyRiemann}}{=}
	\frac{1}{\iu\omega}\cdot
	\frac{1}{\iu\frac{\sigma_\di(\idx{i}_\di-1)}{\omega}\dx}
	\bigg(
	f(\Gamma(\idx{i}_1),\ldots,\Gamma(\idx{i}_\di),\ldots,\Gamma(\idx{i}_d))
	-
	(\tau_\di f)(\Gamma(\idx{i}_1),\ldots,\Gamma(\idx{i}_\di-1),\ldots,\Gamma(\idx{i}_d))
	\bigg).
\end{align}
\end{subequations}
By substituting \eqref{eq:FRestrictedOnGamma} and noting that
\begin{align*}
	f(\Gamma(\idx{i}_1),\ldots,\Gamma(\idx{i}_\di\pm 1),\ldots,\Gamma(\idx{i}_d)) = (\tau_\di^{\pm 1}\hat U)(\omega,\idx{i}_1,\ldots,\idx{i}_d),
\end{align*}
we can simplify \eqref{eq:PhiDefinitionA} as
\begin{align}
	\label{eq:AlmostPhiEquation}
	(\tau_\di^{-1}f)(\Gamma(\idx{i}_1),\ldots,\Gamma(\idx{i}_d)) = (\tau_\di\hat{U})(\omega,\idx{i}_1,\ldots,\idx{i}_d) - \left(2\iu\omega\dx - \sigma_\di(\idx{i}_\di)\dx\right)\hat\Phi_\di(\omega,\idx{i}_1,\ldots,\idx{i}_d).
\end{align}
On the other hand, by applying \(\tau_\di^{-1}\) to \(\hat{\Phi}_\di\) with the formula \eqref{eq:PhiDefinitionB}, \ie~by replacing \(\idx{i}_\di\) with \(\idx{i}_\di-1\) in \eqref{eq:PhiDefinitionB}, we also have
\begin{align}
	\label{eq:FLeftFormula}
	(\tau_\di^{-1}f)(\Gamma(\idx{i}_1),\ldots,\Gamma(\idx{i}_d)) = (\tau_\di^{-1}\hat U)(\omega,\idx{i}_1,\ldots,\idx{i}_d) - \dx(\tau_\di^{-1}\sigma_\di)(\idx{i}_\di)(\tau^{-1}\hat \Phi_\di)(\omega,\idx{i}_1,\ldots,\idx{i}_d).
\end{align}
Then by replacing the left-hand side of \eqref{eq:AlmostPhiEquation} with  \eqref{eq:FLeftFormula}, we obtain
\begin{align*}
	\tau_\di^{-1}\hat{U} - \dx(\tau_\di^{-1}\sigma_\di)(\tau^{-1}_\di\hat\Phi_\di) = (\tau_\di\hat U) - (2\iu\omega\dx - \sigma_\di\dx)\hat\Phi_\di,
\end{align*}
which is rearranged as an equation for \(\hat{\Phi}_\di\):
\begin{align}
	\label{eq:PhiEquationFourier}
	-\iu\omega\hat\Phi_\di = -\frac{1}{2}\left((\tau_\di^{-1}\sigma_\di)(\tau_\di^{-1}\hat\Phi_\di) + \sigma_\di\hat\Phi_\di\right) - \frac{1}{2\dx}\left(\tau_\di\hat U - \tau_\di^{-1}\hat U\right).
\end{align}
By a similar procedure but applied to \eqref{eq:PsiDefinition}, we get the formula analogous to \eqref{eq:FLeftFormula}:
\begin{align}
	\label{eq:FRightFormula}
	(\tau_\di f)(\Gamma(\idx{i}_1),\ldots,\Gamma(\idx{i}_d)) = (\tau_\di\hat U)(\omega,\idx{i}_1,\ldots,\idx{i}_d) + \dx\sigma_\di(\idx{i}_\di)(\tau\hat{\Psi}_\di)(\omega,\idx{i}_1,\ldots,\idx{i}_d),
\end{align}
and an equation for \(\hat{\Psi}_\di\) (analogous to \eqref{eq:PhiEquationFourier}):
\begin{align}
	\label{eq:PsiEquationFourier}
	-\iu\omega\hat\Psi_\di = -\frac{1}{2}\left((\tau_\di^{-1}\sigma_\di)\hat\Psi_\di + \sigma_\di(\tau_\di\hat\Psi_\di)\right) - \frac{1}{2h}\left(\tau_\di\hat U - \tau_\di^{-1}\hat U\right).
\end{align}
Now, we evaluate Eq.~\eqref{eq:ComplexifiedDiscreteWaveEquationFourier} on \((\Gamma(\idx{i}_1),\ldots,\Gamma(\idx{i}_d))\), where we may substitute not only \(f(\Gamma(\idx{i}_1),\ldots,\Gamma(\idx{i}_d))\) by \(\hat{U}\), but also \((\tau_\di^{\pm 1}f)(\Gamma(\idx{i}_1),\ldots,\Gamma(\idx{i}_d))\) with \eqref{eq:FLeftFormula} and \eqref{eq:FRightFormula}.  This turns Eq.~\eqref{eq:ComplexifiedDiscreteWaveEquationFourier} into
\begin{align}
	\label{eq:UEquationFourier}
	-\omega^2\hat U = 
	\sum_{\di=1}^d \frac{1}{\dx^2}\left(-2\hat U + \tau_\di^{-1}\hat U + \tau_\di \hat U\right) + \sum_{\di=1}^d\frac{1}{\dx}\left(\sigma_\di(\tau_\di \hat\Psi_\di) - (\tau_\di^{-1}\sigma_\di)(\tau_\di^{-1}\hat\Phi_\di)\right).
\end{align}
By applying the inverse Fourier transform to \eqref{eq:UEquationFourier}, \eqref{eq:PhiEquationFourier} and \eqref{eq:PsiEquationFourier}, we complete the derivation of the discrete PML equations~\eqref{eq:DiscretePML}.
\thmref{thm:Reflectionless} also follows by the remark at Eq.~\eqref{eq:FRestrictedOnGamma}, \thmref{thm:DecayRate} and \thmref{thm:DecayRateLessThanOne}.

\section{Conclusion}
\label{sec:Conclusion}

This paper presents a derivation of a discrete PML that perfectly matches the multi-dimensional discrete wave equation. By construction, the discrete PML produces no numerical reflection.  Similar to most previous PMLs, it takes a finite difference form with a local compact stencil, which is ideal as an efficient boundary treatment.  After the domain is truncated, we observe numerical stability and exponentially small residual waves which can be practically maintained below machine zero.  

The arrival at such a discrete PML perhaps sheds some light on numerical approaches on a larger scope.
In the past quarter century, PML has been discretized through the methods of finite difference, finite element, discontinuous Galerkin, \etc\ These discretization schemes are deeply rooted in the approximation theory. Numerical reflections come with no surprise when a discrete PML is derived following this route.  On the other hand, if the already-discretized wave equation is treated as a discrete problem in its own right, it is possible to arrive at a reflectionless PML that bears similar properties promised by the continuous PML.  On the way of the derivation, one faces the mathematical questions such as ``What is the discrete analog of analytic continuation?'' and ``Can a finite difference operator be extended seamlessly to a discrete complex domain?'' Often one learns deeper insights into the fundamental structure of the continuous problem just by answering these discrete questions.

The PML presented in this work is for one specific discretized scalar wave equation.  
\revise{In other words, it is not a general theory.
Each discrete wave equation requires a new investigation in the perspective of discrete complex analysis and discrete differential geometry.   
A discrete wave equation based on a higher order scheme may require a different discrete Cauchy--Riemann relation. 
Designing PMLs that are coupled with a real grid stretching \cite{Kreiss:2016:ASG} may require a different analytic continuation of the wave equation.
Another notable property of the current discrete wave equation is that it allows one to investigate the PML dimension by dimension, each of which fortunately admits a straightforward discrete complex extension.
Such a property may not be true for other discretized wave equations. 
A discrete wave equation formulated on an unstructured mesh also requires new coordinate-free theories for discussing any notion of discrete analytic continuation.
Such a question may be closely related to the study of high-dimensional complex manifolds.
The current work also relies on taking Fourier transform in time which separate the temporal coordinate from the spatial ones.  Such a treatment may need to be revised for the discrete Maxwell's equations based on Yee's scheme (FTDT methods), which should be viewed as a discretization of electro\-magnetism in relativistic form \cite{stern:2015:GCE}.

The stability of the presented discrete PML is only shown empirically.  Since the discrete PML is constructed in analog to the one in the continuous setup, it is expected that the stability analysis in the continuous theory admits a discrete analog that proves the stability for the discrete PML.  Developing such a discrete theory would also be an important research direction.

With these wide open questions, it is exciting to expect a new generation of investigations into designing reflectionless discrete PMLs for a variety of discrete wave equations. 
}

 \appendix

 \section{Proof of \thmref{thm:DecayRate}}
 \label{app:ProofOfThmDecayRate}
The result of \thmref{thm:DecayRate} only requires showing Eq.~\eqref{eq:DecayRate} for \(\idx{j}=1\), which is a recursive relation that completes the rest of the proof.  Specifically, we want to show that if \(f\in\CC^{V(\Lambda)}\) is a holomorphic eigenfunction of \((-2+\tau^{-1}+\tau)/\dx^2\), and \(f\circ L_0 = W_k\), then \(f\circ L_1 = \rho(s_0,k,\omega)\tau W_k\).

First, let us speculate the eigenspace \(\mathcal{Y}_\lambda\) of the operator \((-2+\tau^{-1}+\tau)/\dx^2\vert_{\CC^\ZZ}\) for all its eigenvalues \(\lambda\).  In addition to \(W_k(\idx{i}) = e^{\iu k\dx\idx{i}}\) defined in \eqref{eq:DiscreteMode}, we let \(A\colon\ZZ\rightarrow\CC\) denote the ``linear'' function
\begin{align*}
	A(\idx{i}) \coloneqq \idx{i}.
\end{align*}
From the standard finite difference theory, we learn that the full spectrum and the associated eigenspaces of the operator \((-2+\tau^{-1}+\tau)/\dx^2\vert_{\CC^\ZZ}\) are given by
\begin{subequations}
	\label{eq:FullSpectrum}
	\begin{alignat}{2}[left={\mathcal{Y}_\lambda = \empheqlbrace}]
		\label{eq:FullSpectrumA}
		&\textstyle\spanset\left\{W_k,W_{-k}\right\},
		&&\quad\textstyle \lambda = -\frac{4}{\dx^2}\sin^2\left(\frac{k\dx}{2}\right),\quad k\in\CC\setminus\left\{\nicefrac{n\pi}{\dx}\,\vert\, n\in\ZZ\right\},\\
		\label{eq:FullSpectrumB}
		&\textstyle\spanset\left\{W_0, A\right\},
		&&\quad\textstyle \lambda = 0,\\
		\label{eq:FullSpectrumC}
		&\textstyle\spanset\left\{W_{\pi/\dx}, AW_{\pi/\dx}\right\}, 
		&&\quad\textstyle\lambda = -\frac{4}{\dx^2}.
	\end{alignat}
\end{subequations}
Note that by the periodicity of the complex sine function, the spectrum of the case  \eqref{eq:FullSpectrumA} can be bijectively parameterized by
\begin{align*}
	\revise{
    k\in \cK\setminus((-\nicefrac{\pi}{\dx},0)\cup\{\nicefrac{\pi}{\dx}\}).
    }
\end{align*}
Now, knowing that the eigenfunction \(f\in\CC^{V(\Lambda)}\) we are looking for already satisfies \(f\circ L_0 = W_k\) with  \revise{\(k\in\cK\)}, we have the candidate for \(f\circ L_1\) limited to \eqref{eq:FullSpectrum} with \(\lambda = -\nicefrac{4}{\dx^2}\sin^2(\nicefrac{k\dx}{2})\) with \revise{\(k\in\cK\) and } \(0\leq k\leq \revise{\nicefrac{\pi}{\dx}}\).  Keep in mind that the cases \(k=0, \revise{\nicefrac{\pi}{\dx}}\) invoke \eqref{eq:FullSpectrumB} and \eqref{eq:FullSpectrumC} respectively.

Next, we apply the holomorphicity condition \eqref{eq:DiscreteCauchyRiemann} to the quadrilaterals sandwiched by \(L_0\) and \(L_1\).  Observe that \(\tau z - z=\dx\), the discrete Cauchy--Riemann equation is \(\tau\)-translationally invariant.  This allows \eqref{eq:DiscreteCauchyRiemann} to take a more elagent form
\begin{align*}
	\frac{\tau f\circ L_1 - f\circ L_0}{2\dx+\iu\frac{s_0}{\omega}\dx} = \frac{f\circ L_1 - \tau f\circ L_0}{\iu\frac{s_0}{\omega}\dx},
\end{align*}
which can be further rearranged into
\begin{align}
	\label{eq:Recursion}
	\left(2 + \iu\frac{s_0}{\omega}(1-\tau)\right)(f\circ L_1) = \left(2 + \iu\frac{s_0}{\omega}(1-\tau^{-1})\right)\tau (\underbrace{f\circ L_0}_{=\,W_k}).
\end{align}

Now, we discuss the cases  \revise{\(k\in\cK\setminus\{0,\nicefrac{\pi}{\dx}\}\)}, \(k=0\) and \(k = \nicefrac{\pi}{\dx}\) separately.  The following facts are useful down the road:
\begin{align}
	\tau W_{\pm k} = e^{\pm\iu k \dx}W_k,\quad \tau A = A+1.
\end{align}

\paragraph{Case  \revise{\(k\in\cK\setminus\{0,\nicefrac{\pi}{\dx}\}\)}}
By \eqref{eq:FullSpectrumA} \(f\circ L_1\) takes a general form
\begin{align}
	\label{eq:Ansaltz1}
	f\circ L_1 = c^+W_k + c^- W_{-k}
\end{align}
for \(c^+,c^-\in\CC\). Substituting \eqref{eq:Ansaltz1} to \eqref{eq:Recursion} and by linear independence of \(W_k\) from \(W_{-k}\) we must have \(c^-=0\) or 
\begin{align}
	\label{eq:NonzeroC-}
	\left(2 + \iu\frac{s_0}{\omega}(1-\tau)\right)W_{-k} = 0\quad\implies\quad 2 + \iu\frac{s_0}{\omega}(1-e^{-\iu k\dx}) = 0.
\end{align}
The latter case \eqref{eq:NonzeroC-} never happens for \(s_0,\omega\in\RR\) and  \revise{\(k\in\cK\setminus\{0,\nicefrac{\pi}{\dx}\}\)}. Therefore \(c^- = 0\).  Now with both \(f\circ L_1\) and \(f\circ L_0\) multiples of \(W_k\), which is an eigenfunction of \(\tau\) with eigenvalue \(e^{\iu k\dx}\), Eq.~\eqref{eq:Recursion} immediately gives us
\begin{align*}
	f\circ L_1 = \frac{2+\iu\frac{s_0}{\omega}(1-e^{-\iu k\dx})}{2+\iu\frac{s_0}{\omega}(1-e^{\iu k\dx})}\tau W_k = \rho(s_0,k,\omega)\tau W_k.
\end{align*}
\qed

\paragraph{Case \(k=0\)}
By \eqref{eq:FullSpectrumB} we have 
\(
	f\circ L_1 = c_0 + c_1 A
\)
for some \(c_0,c_1\in\CC\), which turns \eqref{eq:Recursion} into
\begin{align*}
	2c_0 + 2c_1 A - c_1\iu\frac{s_0}{\omega} = 2.
\end{align*}
This implies \(c_0 = 1\) and \(c_1 = 0\).  Therefore, \(f\circ L_1 = 1 = \rho(s_0,0,\omega)\tau W_0\).\qed

\paragraph{Case \(k=\nicefrac{\pi}{\dx}\)}
By \eqref{eq:FullSpectrumB}, we express
\(
	f\circ L_1 = c_0W_{\pi/\dx} + c_1 AW_{\pi/\dx},
\)
\(c_0,c_1\in\CC\).  Note that \(\tau W_{\pi/\dx} = -W_{\pi/\dx}\) and \(\tau(AW_{\pi/\dx}) = (\tau A)(\tau W_{\pi/\dx}) = -AW_{\pi/\dx} - W_{\pi/\dx}\).
Hence \eqref{eq:Recursion} becomes
\begin{align*}
	c_0\left(2+2\iu\frac{s_0}{\omega}\right)W_{\pi/\dx} + c_1\left(2+2\iu\frac{s_0}{\omega}\right)AW_{\pi/\dx} + c_1 \iu \frac{s_0}{\omega}W_{\pi/\dx} = -\left(2+2\iu\frac{s_0}{\omega}\right)W_{\pi/\dx}.
\end{align*}
Since \(AW_{\pi/\dx}\) is linearly independent from \(W_{\pi/\dx}\), and \((2+2\iu\nicefrac{s_0}{\omega})\neq 0\), we must have \(c_1 = 0\), and consequently \(c_0 = -1\).  Therefore, \(f\circ L_1 = -W_{\pi/\dx} = \rho(s_0,\nicefrac{\pi}{\dx},\omega)\tau W_{\pi/\dx}\).\qed

\revise{
\section{Proof of \thmref{thm:DecayRateLessThanOne}}
\label{app:ProofOfThmDecayRateLessThanOne}
Here we prove that for every \(s>0\), \(\omega\in\RR\), \(\omega\neq 0\) and \(k\in\cK\) with \(0<\sgn(\omega)\re(k)<\nicefrac{\pi}{\dx}\), the expression 
\begin{align}
    \label{eq:AppRhoDefinition}
    \rho(s,k,\omega)=\frac{2+\iu\frac{s}{\omega}(1-e^{-\iu k\dx})}{2+\iu\frac{s}{\omega}(1-e^{\iu k \dx})}
\end{align}
satisfies 
\begin{align*}
    |\rho(s,k,\omega)|<1\quad\text{and}\quad \frac{|\rho(s,k,\omega)|}{\left|\rho\left(s,-\conj{k},\omega\right)\right|}<1.
\end{align*}
Note that \eqref{eq:AppRhoDefinition} has the symmetry \(\rho(s,k,-\omega) = \conj{\rho(s,-\conj{k},\omega)}\), which allows us to consider only the case when \(\omega>0\) and \(k\in\cK\), \(\re(k)\in(0,\nicefrac{\pi}{\dx})\).  

For notational simplicity, define \(\zeta\coloneqq e^{\iu k\dx}\) and \(a\coloneqq \nicefrac{2\omega}{s}\), which satisfy
\begin{align}
    \label{eq:AppZetaCondition}
    |\zeta|>1,\quad \im(\zeta)>0,\quad\text{and}\quad a>0.
\end{align}
  Then we can write \eqref{eq:AppRhoDefinition} as
\begin{align*}
    \rho(s,k,\omega) = \frac{1-\iu a-\nicefrac{1}{\zeta}}{1-\iu a - \zeta},\quad
    \rho\left(s,-\conj k,\omega\right)
    = \frac{1-\iu a-\nicefrac{1}{\conj\zeta}}{1-\iu a-\conj\zeta}.
\end{align*}
We show \(|\rho(s,k,\omega)|<1\) by checking \(|1-\iu a-\zeta|^2>|1-\iu a-\nicefrac{1}{\zeta}|^2\).  By expansion, this is equivalent to verifying
\begin{align}
    \label{eq:AppInequalityClaim}
    |\zeta|^2-2\re(\zeta)+2a\im(\zeta)>\frac{1}{|\zeta|^2}-2\re\left(\tfrac{1}{\zeta}\right)+2a\im\left(\tfrac{1}{\zeta}\right).
\end{align}
To show \eqref{eq:AppInequalityClaim}, first note that whenever \(|\zeta|>1\) we have
\begin{align*}
    |\zeta|^2-2\re(\zeta)>\frac{1}{|\zeta|^2}-2\re\left(\tfrac{1}{\zeta}\right),
\end{align*}
which follows immediately from \(\re(\nicefrac{1}{\zeta}) = \nicefrac{\re(\zeta)}{|\zeta|^2}\) and the expansion of \(|\zeta|^2|1-\zeta|^2>|1-\zeta|^2\).  The claimed inequality \eqref{eq:AppInequalityClaim} then follows since  \(2a\im(\zeta)>0\) and \(2a\im(\nicefrac{1}{\zeta})<0\) as implied by \eqref{eq:AppZetaCondition}.  Consequently, \(|\rho(s,k,\omega)|<1\).

Finally, let us prove  \(\nicefrac{|\rho(s,k,\omega)|}{|\rho(s,-\conj{k},\omega)|}<1\).  That is, we want to show
\begin{align}
    \label{eq:AppInequalityClaim2}
    \frac{|1-\iu a - \nicefrac{1}{\zeta}|}{|1-\iu a - \zeta|}<\frac{\left|1-\iu a - \nicefrac{1}{\conj\zeta}\right|}{\left|1-\iu a - \conj\zeta\right|}.
\end{align}
Observe that \((1-\iu a)\) lies in the lower half plane and \(\zeta\) lies in the upper half plane.  Therefore, the distance from \((1-\iu a)\) to \(\zeta\) is greater than that to \(\conj\zeta\).  In other words, \(|1-\iu a - \zeta|>|1-\iu a - \conj{\zeta}|\).  Similarly, \(\nicefrac{1}{\zeta}\) lies in the lower half plane, so \(|1-\iu a - \nicefrac{1}{\conj\zeta}|>|1-\iu a - \nicefrac{1}{\zeta}|\).
The inequality \eqref{eq:AppInequalityClaim2} then follows.
\qed
}

\section*{Acknowledgments}
This work has been supported by DFG Collaborative Research Center TRR 109 ``Discretization in Geometry and Dynamics.''  Additional support was provided by SideFX software.  I especially thank Prof.~Olof Runborg at KTH, Stockholm, for setting forth the PML problem and many useful discussions.



\bibliographystyle{model1-num-names}
\bibliography{DiscretePML.bib}

\end{document}